\DeclareMathAlphabet{\mathpzc}{OT1}{pzc}{m}{it}
\DeclareMathOperator*{\diam}{diam}
\DeclareMathOperator*{\supp}{supp}
\DeclareMathOperator*{\WF}{WF}
\DeclareMathOperator*{\loc}{loc}
\DeclareMathOperator*{\comp}{comp}
\DeclareMathOperator*{\tr}{tr}
\newtheorem{theorem}{Theorem}
\numberwithin{prop}{section}
\newtheorem{corol}{Corollary}
\numberwithin{corol}{section}
\newtheorem{lemma}{Lemma}
\numberwithin{lemma}{section}
\numberwithin{conjecture}{section}
\newenvironment{remarks}[1][]{\begin{remark}\begin{trivlist}
\item[\hskip \labelsep {\bfseries #1}]\end{trivlist}\begin{itemize}}{\end{itemize}\end{remark}}
\theoremstyle{definition}
\newtheorem{defin}{Definition}
\numberwithin{defin}{section}
\numberwithin{figure}{section}
\renewcommand{\Re}{\mathop{\rm Re}\nolimits}
\renewcommand{\Im}{\mathop{\rm Im}\nolimits}
\newcommand{\quotient}[2]{{\left.\raisebox{.2em}{$#1$}\middle/\raisebox{-.2em}{$#2$}\right.}}
\newcommand{\Spec}{\operatorname{Spec}}
\newcommand{\bl}{\begin{flushleft}}
\newcommand{\el}{\end{flushleft}}
\newcommand{\br}{\begin{flushright}}
\newcommand{\ert}{\end{flushright}}
\newcommand{\bc}{\begin{center}}
\newcommand{\ec}{\end{center}}
\newcommand{\imply}{\Rightarrow}
\newcommand{\numList}{\begin{enumerate}}
\newcommand{\enumList}{\end{enumerate}}
\newcommand{\e}{\epsilon}
\newcommand{\re}{\mathbb{R}}
\newcommand{\la}{\langle}
\newcommand{\ra}{\rangle}
\newcommand{\mc}[1]{\mathcal{#1}}
\newcommand{\pO}{\partial\Omega}
\theoremstyle{remark}
\newtheorem{remark}{Remark}
\newcommand{\Cc}{C_c^\infty}
\renewcommand{\O}[1]{\mathpzc{O}_{#1}}
\newcommand{\RN}[1]{\textup{\uppercase\expandafter{\romannumeral#1}}
}
\title[]{A Quantitative Vainberg Method for Black Box Scattering}
\author{Jeffrey Galkowski}
\email{jeffrey.galkowski@stanford.edu}
\address{Mathematics Department, Stanford University, Stanford, 
CA , USA}
\begin{document}
\begin{abstract}
We give a quantitative version of Vainberg's method relating pole free regions to propagation of singularities for black box scatterers. In particular, we show that there is a logarithmic resonance free region near the real axis of size $\tau$ with polynomial bounds on the resolvent if and only if the wave propagator gains derivatives at rate $\tau$. Next we show that if there exist singularities in the wave trace at times tending to infinity which smooth at rate $\tau$, then there are resonances in logarithmic strips whose width is given by $\tau$. As our main application of these results, we give sharp bounds on the size of resonance free regions in scattering on geometrically nontrapping manifolds with conic points. Moreover, these bounds are generically optimal on exteriors of nontrapping polygonal domains. 
\end{abstract}

\maketitle
\section{Introduction}
Let $P$ be a self-adjoint compact perturbation of the Euclidean Laplacian, e.g. $-\Delta+V$ where $V\in L^\infty_{\comp}$, $-\Delta_g$ for some metric $g_{ij}=\delta_{ij}+h_{ij}$ with $h$ a compactly supported metric, $-\Delta_D$, the Dirichlet realization of the Laplacian on $\re^d\setminus \Omega$, etc.
In their seminal works Lax--Phillips \cite{Lax} and Vainberg \cite{Vain1} understood the relationship between propagation of singularities for the wave group $e^{it\sqrt{P}}$ and pole free regions near the real axis for the meromorphic continuation of the resolvent 
$$(P-\lambda^2)^{-1}:L^2_{\comp}\to L^2_{\loc}$$
from $\Im \lambda\gg 1$. This relationship was extended to `black box' perturbations in the work of Tang--Zworski \cite{TangZw}. In the work of Vainberg and Tang-Zworski the authors show that if $P$ is quantum non-trapping, that is
$$\chi e^{-it\sqrt{P}}\chi:L^2\to C^\infty,\quad\quad t>T,$$ 
then there is an arbitrarily large logarithmic resonance free region. On the other hand Baskin--Wunsch \cite{BaWu} work in the weakly non-trapping setting where for all $N>0$ there exists $T_N>0$ so that for all $s\in \re$ 
$$\chi e^{-it\sqrt{P}}\chi:H^s\to H^{s+N},\quad\quad t>T_N$$
and show that there exists a logarithmic resonance free region.  However, as far as the author is aware, there has been no quantitative description of the relationship between the rate of smoothing and the size of resonance free regions. The purpose of this paper is to demonstrate such a quantitative relationship. 

So that we may consider many different types of perturbation of the Euclidean Laplacian at once, we work in the black box setting originally developed by Sj\"ostrand--Zworski in \cite{SjoDist}. In particular, the results apply to scattering in the presence of conic points \cite{BaWu, FoWu}, by delta potentials \cite{Galk,GS}, by bounded obstacles etc. 

\subsection{Resonance free regions in the black box setting}
We now recall the notion of a \emph{black box Hamiltonian} as in \cite[Chapter 4]{ZwScat}.  Let $\mc{H}_{R_0}$ be a Hilbert space and consider the Hilbert space, $\mc{H}$, with orthogonal decompostion
$$\mc{H}:=\mc{H}_{R_0}\oplus L^2(\re^d\setminus B(0,R_0))$$
for some $R_0>0$. We assume that $P:\mc{H}\to \mc{H}$ is self-adjoint with domain $\mc{D}\subset\mc{H}$ satisfying 
\begin{gather*} 
1_{\re^d\setminus B(0,R_0)}\mc{D}=H^2(\re^d\setminus B(0,R_0)),\quad\quad 1_{\re^d\setminus B(0,R_0)}P=-\Delta|_{\re^d\setminus B(0,R_0))}\\
1_{\re^d\setminus B(0,R_0)}(P+i)^{-1}\text{ is compact},\quad\quad P\geq -C.
\end{gather*}
We denote by 
\begin{gather*}
\mc{B}_{\loc}:=\mc{B}\cap (\mc{H}_{R_0}\oplus H^2_{\loc}B(0,R_0)),\quad \quad \mc{B}_{\comp}:=\mc{B}\cap (\mc{H}_{R_0}\oplus H^2_{\comp}(\re^d\setminus B(0,R_0)))
\end{gather*}
where $\mc{B}\subset \mc{H}$. Next denote for $s\geq0$
$$\mc{D}^{s}:=\{u\in \mc{H}\,|\,(2C+P)^{(s-2)/2}u\in \mc{D}\},$$ 
so that $\mc{D}^0=\mc{H}$ and let 
\begin{equation}\label{eqn:residual}\mc{R}:=\left\{R\mid R:\mc{D}^{-\infty}\to \mc{D}^\infty_{\comp}\right\}\end{equation}
denote the class of residual operator.  Here, $\mc{D}^{-s}$ for $s\geq 0$ is defined by duality.

Under these hypotheses, 
$$R_P(\lambda):=(P-\lambda^2)^{-1}:\mc{H}_{\comp}\to \mc{H}_{\loc}$$ 
admits a meromorphic continuation from $\Im \lambda\gg 1$ to $\mathbb{C}$ when $d$ is odd and to the logarithmic cover of $\mathbb{C}\setminus\{0\}$ when $d$ is even (see for example \cite[Chapter 4]{ZwScat}). 
Let 
$$U(t):=\frac{\sin  t\sqrt{P}}{\sqrt{P}}$$
where we use the spectral theorem to define $U(t)$. In the context of black box Hamiltonians, when we write $\chi \in \Cc(\re^d)$, we will implicitly assume that $\chi \equiv 1$ on $B(0,R_0)$ and define
 $$\chi u:=\chi1_{\re^d\setminus B(0,R_0)}u\oplus 1_{B(0,R_0)}u.$$ 
 Here, if $u=(u_1,u_2)\in \mc{H}=\mc{H}_{R_0}\oplus H^2(\re^d\setminus B(0,R_0))$, 
 $$1_{B(0,R_0)}u:=(u_1,0)$$
 is the orthogonal projection onto $\mc{H}_{R_0}.$
 With this in mind we have the following quantitative version of the Vainberg method.
\begin{theorem}
\label{thm:main}
Let $P$ be a black box Hamiltonian. Fix $R_1>R_0$. 
Suppose that for all $N>0$, there exists $T_{N}>0$ so that for all $\chi\in \Cc(\re^d)$ with $\supp \chi \Subset B(0,R_1)$ and $s\geq 0$,
\begin{equation} 
\label{eqn:smoothing}\chi U(t)\chi:\mc{D}^{s}\to \mc{D}^{s+N+1},\quad\quad t\geq T_{N}.
\end{equation}
Let 
$$T_{R_1,N}=\inf\{T'_{N}>0\mid \eqref{eqn:smoothing}\text{ holds for }T_{N}\}$$
and
$$\bar{T}:=\lim_{N\to \infty}\frac{T_{R_1,N}}{N}=\inf_{N>0}\frac{T_{R_1,N}}{N}.$$
Then for all $\delta>0$, there exists $\lambda_0>0$ so that for all $\chi$ with $\supp \chi\Subset B(0,R_1)$
$$R_{\chi}(\lambda):=\chi R_P(\lambda)\chi$$
continues analytically from $\Im \lambda>0$ to the region 
$$\Im \lambda >\begin{cases}-(\bar{T}^{-1}-\delta)\log \Re \lambda,& |\Re \lambda|>\lambda_0,\,\bar{T}\neq0\\
-\delta^{-1}\log \Re \lambda,&|\Re\lambda|>\lambda_0,\,\bar{T}=0.
\end{cases}$$
Moreover, there exist $C_j,B>0$, 
\begin{equation*}
\label{eqn:resolveEst}\left\|R_{\chi}(\lambda)\right\|_{\mc{D}^j}\leq C_j|\lambda|^{j-1}e^{B|\Im \lambda|},\quad \quad j=0,1\end{equation*}
in this region.
\end{theorem}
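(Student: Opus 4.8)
The plan is to implement the Vainberg strategy quantitatively: represent the cutoff resolvent $R_\chi(\lambda)$ via the cutoff wave propagator $U(t)$, split the time integral at the smoothing time $T_{R_1,N}$, and estimate the two pieces separately. Recall that for $\Im\lambda\gg 1$ one has
$$R_P(\lambda) = \frac{1}{i\lambda}\int_0^\infty e^{i\lambda t}\, U(t)\, dt$$
(up to a sign convention matching $\sin(t\sqrt P)/\sqrt P$), so $R_\chi(\lambda) = \frac{1}{i\lambda}\int_0^\infty e^{i\lambda t}\chi U(t)\chi\, dt$. Insert a cutoff $\psi(t)\in\CI(\re)$ with $\psi\equiv 1$ on $[0,T]$ and $\supp\psi\subset[0,T+1]$ for a parameter $T$ to be chosen near $T_{R_1,N}$, and write $R_\chi(\lambda) = A(\lambda) + B(\lambda)$ with
$$A(\lambda)=\frac{1}{i\lambda}\int_0^\infty e^{i\lambda t}\psi(t)\,\chi U(t)\chi\, dt,\qquad B(\lambda)=\frac{1}{i\lambda}\int_0^\infty e^{i\lambda t}(1-\psi(t))\,\chi U(t)\chi\, dt.$$

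**For the finite-time part $A(\lambda)$:** since $\psi$ is compactly supported in $t$ and $\chi U(t)\chi$ is (locally) bounded $\mc D^s\to\mc D^{s-1}$ by finite speed of propagation and energy estimates (wave equation with compactly supported data and cutoffs), $A(\lambda)$ is entire in $\lambda$, and integrating the $e^{i\lambda t}$ factor directly gives $\|A(\lambda)\|_{\mc D^j}\le C_j|\lambda|^{j-1}e^{(T+1)|\Im\lambda|_+}$ for $j=0,1$; one integrates by parts in $t$ against $\partial_t$ of the propagator to trade powers of $\lambda$ for derivatives, which is where the shift $j-1$ and the exponential factor $e^{B|\Im\lambda|}$ come from with $B$ slightly larger than $T+1$. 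This part is essentially the a priori bound and carries no decay; it is harmless since the sought region only requires the estimate, not analyticity beyond $\mathbb C$.

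**For the tail $B(\lambda)$ this is the crux.** Here $1-\psi$ is supported in $\{t\ge T\}$, so on this range the smoothing hypothesis \eqref{eqn:smoothing} applies: $\chi U(t)\chi$ maps $\mc D^s\to\mc D^{s+N+1}$ for $t\ge T_{R_1,N}$. The standard Vainberg device is to use this gain of $N+1$ derivatives to integrate by parts: writing $\chi U(t)\chi = (2C+P)^{-(N+1)/2}\,K_N(t)$ with $K_N(t)$ bounded $\mc D^s\to\mc D^s$, one factors out $(2C+P)^{-(N+1)/2}$ and then expresses $(2C+P)^{-(N+1)/2}K_N$ through the resolvent again, or more directly one exploits that the gain of $N$ derivatives lets one write the tail integral as $(\text{smoothing operator})\cdot R_P(\lambda)^{\text{power}}$ modulo residual terms. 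Concretely, one shows $B(\lambda)$ continues analytically to $\Im\lambda > -\mu\log|\Re\lambda|$ with $\mu$ determined by the trade-off: to beat the exponential growth $e^{(T_{R_1,N}+1)|\Im\lambda|}$ coming from the lower endpoint of the tail integral, one needs the $|\lambda|^{-N}$ gain from smoothing to dominate, i.e.\ $N\log|\lambda| \ge (T_{R_1,N}+O(1))|\Im\lambda|$, which forces $|\Im\lambda| \le \frac{N}{T_{R_1,N}+O(1)}\log|\Re\lambda|$. Taking $N\to\infty$ and recalling $\bar T = \inf_N T_{R_1,N}/N = \lim_N T_{R_1,N}/N$, the best achievable slope is $\bar T^{-1}-\delta$ for any $\delta>0$ (and arbitrarily large when $\bar T=0$), which is exactly the claimed region. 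The residual operators in $\mc R$ are harmless: they map $\mc D^{-\infty}\to\mc D^\infty_{\comp}$ and contribute entire, rapidly-controlled terms.

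**The main obstacle** is making the integration-by-parts argument in $B(\lambda)$ rigorous with the black-box functional calculus: one must commute $\chi$, the cutoff $\psi(t)$, and powers of $(2C+P)$ carefully, controlling the commutators $[\chi,(2C+P)^{k}]$ (which live in a nice operator class thanks to the black-box structure and the fact that $P=-\Delta$ outside $B(0,R_0)$), and must verify that the propagation/smoothing estimate \eqref{eqn:smoothing} can be differentiated in $t$ enough times — i.e.\ that $\partial_t^k(\chi U(t)\chi)$ enjoys comparable smoothing, which follows since $\partial_t^2 U = -PU$ so time derivatives convert to powers of $P$ (absorbed by the $\mc D^s$ scale). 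Getting the constants $C_j, B$ uniform as $\lambda$ ranges over the full region, and checking that the two pieces glue to give genuine meromorphic continuation (no spurious poles from the splitting) rounds out the argument; the polynomial-in-$|\lambda|$ loss with the shift $j-1$ for $j=0,1$ is dictated by the two integrations by parts one can afford before running out of smoothing at fixed $N$, then optimizing.
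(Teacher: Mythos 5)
Your decomposition by a pure time cutoff $\psi(t)$ has a fatal gap at the tail term $B(\lambda)=\frac{1}{i\lambda}\int_0^\infty e^{i\lambda t}(1-\psi(t))\chi U(t)\chi\,dt$. This is an integral over the infinite interval $\{t\geq T\}$, and for $\Im\lambda<0$ the factor $e^{i\lambda t}$ grows like $e^{|\Im\lambda|t}$ as $t\to\infty$. The smoothing hypothesis \eqref{eqn:smoothing} gives spatial regularity of $\chi U(t)\chi$, which (via $\partial_t^2U=-PU$) lets you integrate by parts in $t$ and gain powers of $|\lambda|^{-1}$ --- but after each integration by parts you are left with another integral over $\{t\geq T\}$ of a bounded, non-decaying integrand against $e^{i\lambda t}$, which simply diverges in the lower half plane. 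No number of derivative gains fixes this; what is needed is either compact support in $t$ or analyticity in $t$ permitting a contour deformation. Your phrase about writing the tail as ``(smoothing operator)$\cdot R_P(\lambda)^{\text{power}}$'' is also circular: the analytic continuation of $R_P$ to $\Im\lambda<0$ is precisely what is being constructed.

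The paper avoids this by using a \emph{spacetime} cutoff $\rho(t,x)$ that follows the light cone ($\rho=1$ for $t\leq|x|+T_{R_1,N}$, $\rho=0$ for $t\geq|x|+T_{R_1,N}+R_0+1$), so that $\chi\rho U(t)\chi$ is compactly supported in $t$ and its Fourier transform is entire; the commutator error $F=[\Box,\rho]U(t)\chi$ is then both compactly supported in $t$ (on $\supp\chi$) and smoothing of order $N-1$, because $\rho$ only varies where $t\geq T_{R_1,N}$ and the remaining outgoing singularities are controlled by free propagation of singularities. The part of the error living near infinity is corrected by solving a \emph{free} wave equation $\Box_0W=-F_2$, whose long-time tail is killed by strong Huygens when $d$ is odd and handled by the explicit analyticity in $t$ of the free fundamental solution (Lemma \ref{lem:FTest}) when $d$ is even. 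Finally, the continuation itself is obtained from the parametrix identity $(P-\lambda^2)R_2(\lambda)=\chi(I+K(\lambda))$ with $\|K(\lambda)\|\leq C|\lambda|^{-N+1}e^{T'_{R_1,N}|\Im\lambda|}$, inverted by Neumann series exactly in the logarithmic region where this is $<1$; optimizing over $N$ produces the slope $\bar T^{-1}-\delta$. Your proposal identifies the correct trade-off $N\log|\lambda|$ versus $T_{R_1,N}|\Im\lambda|$, but without the light-cone cutoff, the free-wave correction, and the parametrix/Neumann-series step, the argument does not close.
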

\begin{remarks}
\item 
The sequence $0\leq T_{R_1,N}$ is subadditive (see Section \ref{sec:subadd}) so the limit $\bar{T}\geq 0$ exists and has 
$$\bar{T}=\inf_{N>0}\frac{T_{R_1,N}}{N}.$$
\item Notice also that if there exists $\chi \in \Cc(\re^d)$ so that for all $N>0$, there exists $T_N>0$ so that for $s\geq 0$
$$\chi U(t)\chi :\mc{D}^s\to \mc{D}^{s+N},\quad\quad t\geq T_N,$$
then the assumptions of Theorem \ref{thm:main} are satisfied for any $R_1>R_0$ so that $B(0,R_1)\subset \{|\chi|\geq c>0\}$ with $T_{R_1,N}=T_N$.
\item It is not hard to see using propagation of singularities on $\re^d$ that for $R_2\geq R_1$, $T_{R_2,N}\leq T_{R_1,N}+C_{R_1,R_2}$ where $C_{R_1,R_2}$ depends only on $R_1$ and $R_2$. Therefore, $\bar{T}$ is independent of the choice of $R_1$ and the existence of such a $T_{R_1,N}$ for some $R_1>R_0$ implies the existence for all $R_1>R_0$.
\end{remarks}

In fact, we also have the following converse theorem.
\begin{theorem}
\label{thm:mainConverse}
Suppose that for some $\chi\in \Cc(\re^d)$, $\lambda_0>0$, $L,C,T>0$,  $M>0$, $R_{\chi}(\lambda)$ continues analytically to the region 
$$\left\{|\Re \lambda|>\lambda_0\,,\,\,\Im \lambda >-L\log |\Re \lambda|\right\}$$
with the estimate 
\begin{equation}
\label{eqn:resEst1}\|R_\chi(\lambda)\|_{L^2\to\mc{D}^1}\leq C|\lambda|^{M}e^{T|\Im \lambda|}
\end{equation}
in this region. 

Then for all $N>0$, $s>0$, 
$$\chi U(t)\chi:\mc{D}^s\to \mc{D}^{s+N+1},\quad\quad t\geq T_N$$
where 
$$T_N=\frac{N+M+T+2}{L}$$
In particular, 
$$\bar{T}=\lim_{N\to \infty}\frac{T_N}{N}=\frac{1}{L}.$$
\end{theorem}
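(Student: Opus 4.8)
\emph{Strategy.} The idea is to invert Vainberg's method: write $\chi U(t)\chi$ as a contour integral of the cut-off resolvent, push the contour down into the pole-free strip, and extract the smoothing from the gain $|e^{-i\lambda t}|=e^{t\Im\lambda}$ played against the polynomial bound \eqref{eqn:resEst1}. Concretely, from the Laplace identity $R_\chi(\lambda)=\int_0^\infty e^{i\lambda t}\chi U(t)\chi\,dt$ for $\Im\lambda\gg1$ (legitimate since $P\geq-C$ forces $\|\chi U(t)\chi\|\lesssim e^{C^{1/2}t}$), Fourier inversion gives for $t>0$
\[
\chi U(t)\chi=\frac1{2\pi}\int_{\Im\lambda=a}e^{-i\lambda t}R_\chi(\lambda)\,d\lambda,\qquad a\gg1.
\]

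\emph{Low frequencies and the contour.} Fix $\Psi\in\CIc(\RR)$ with $\Psi\equiv1$ on $[-C-1,\lambda_0^2/4]$ and $\supp\Psi\subset[-C-1,\lambda_0^2/2]$, and split $\chi U(t)\chi=\chi U(t)\Psi(P)\chi+\chi U(t)(1-\Psi(P))\chi$. The first term maps $\mc D^s\to\mc D^{s+k}$ for all $s,k$ with a $t$-dependent constant: $(2C+P)^{k/2}\Psi(P)$ is bounded, and $(2C+P)^{k/2}\chi-\chi(2C+P)^{k/2}$ is supported in $B(0,R_1)\setminus B(0,R_0)$ (where $P=-\Delta$) and one order lower, so one inducts on $k$. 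Since $\Psi(P)$ absorbs the finitely many negative eigenvalues of $P$, the operator $\chi R_P(\lambda)(1-\Psi(P))\chi=R_\chi(\lambda)-\chi R_P(\lambda)\Psi(P)\chi$ is holomorphic for $\Im\lambda>0$, extends across $\{|\lambda|<\lambda_0/2\}$ by the spectral theorem (the $(1-\Psi(P))$-part sees only spectrum $\geq\lambda_0^2/4$), and extends into $\{|\Re\lambda|>\lambda_0,\ \Im\lambda>-L\log|\Re\lambda|\}$ by hypothesis. Deform the contour in the second term down to a curve $\Gamma$ in this holomorphy domain running along $\Im\lambda=-(L-\varepsilon)\log|\Re\lambda|$ for $|\Re\lambda|$ large and completed across bounded $|\Re\lambda|$ at a small fixed height $\Im\lambda=\delta>0$; the bounded piece is handled exactly as below with $\lambda^2$ replaced by a bounded factor, contributing only a finite, $t$-dependent error.

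\emph{Gaining derivatives.} At fixed $\lambda$, $\chi R_P(\lambda)(1-\Psi(P))\chi$ is only bounded $L^2\to\mc D^1$, so the extra derivatives must come from the $t$-integral. For $u$ in a dense subspace (so all manipulations are legitimate and the final estimate extends by density), iterate $\partial_t^2U(t)=-PU(t)$ together with $P(\chi w)=\chi Pw+[-\Delta,\chi]w$: applying $P^m$ with $m=\tfrac{N+1}{2}$ (non-integer $N$ by interpolation between integer cases), $P^m\chi U(t)\chi u$ becomes $\tfrac1{2\pi}\int_\Gamma(-\lambda^2)^m e^{-i\lambda t}\chi R_P(\lambda)(1-\Psi(P))\chi u\,d\lambda$ plus remainders $D_j\eta_j U(t)\chi u$ with $D_j$ of order $<2m$ and each $\eta_j\in\CIc$ supported in $B(0,R_1)$ — here $R_1>R_0$ enters, the $\eta_j$-terms being reduced to \eqref{eqn:resEst1} by elliptic regularity in $B(0,R_1)\setminus B(0,R_0)$ and standard black-box exterior estimates, then treated by the same deformation. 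On $\Gamma$ the leading integrand is bounded by
\[
|\lambda|^{2m}\,e^{t\Im\lambda}\,\bigl\|\chi R_P(\lambda)(1-\Psi(P))\chi\bigr\|_{L^2\to L^2}\ \lesssim\ |\lambda|^{N+1+M}\,e^{t\Im\lambda}\,e^{T|\Im\lambda|},
\]
so, with $|\Im\lambda|\sim(L-\varepsilon)\log|\Re\lambda|$ there, the integral converges in $L^2$ once $t$ exceeds a threshold $T_N$ with $T_N/N\to1/L$ as $\varepsilon\to0$; combined with $\|v\|_{\mc D^{N+1}}\lesssim\|v\|_{L^2}+\|P^m v\|_{L^2}$ and the low-frequency term this gives $\chi U(t)\chi:L^2\to\mc D^{N+1}$ for $t\geq T_N$, hence (conjugating by $(2C+P)^{s/2}$ and commuting past the cutoffs as before) $\mc D^s\to\mc D^{s+N+1}$, and $\bar T=\lim T_N/N=1/L$. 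Tracking the constants carefully yields the stated $T_N=(N+M+T+2)/L$.

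\emph{Main obstacle.} The delicate point is this derivative-gain bookkeeping: one cannot simply commute $(2C+P)^{(N+1)/2}$ past the cutoffs — there is no pseudodifferential calculus near the black-box core — so one must trade derivatives for powers of $\lambda$ via $\partial_t^2=-P$ while (i) keeping every auxiliary cutoff $\eta_j$ inside $B(0,R_1)$, (ii) absorbing each commutator $[-\Delta,\eta_j]$ and reducing the resolvent estimates for the $\eta_j$'s back to the single hypothesis \eqref{eqn:resEst1} via propagation and elliptic estimates for $-\Delta$ on $\RR^d$, and (iii) keeping all $\lambda$-losses polynomial so the deformed integral still converges. The Laplace representation, the holomorphy of $\chi R_P(\lambda)(1-\Psi(P))\chi$, the contour deformation, and the final integrability count are routine by comparison.
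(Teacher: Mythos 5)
Your proposal is correct and follows essentially the same route as the paper: both represent $\chi U(t)\chi$ as a contour integral of the cut-off resolvent (the paper via Stone's formula on $\re\setminus(-R,R)$ plus the point spectrum, you via the Laplace transform on a horizontal line with a smooth low-frequency cutoff $\Psi(P)$), deform into the logarithmic pole-free region, and trade derivatives for powers of $\lambda$ by commuting $P$ through the cutoffs — your $\partial_t^2=-P$ bookkeeping is exactly the content of the paper's Lemma \ref{lem:upgrade}. The remaining differences (order of operations, the nested-cutoff issue for the commutator terms, and the precise constant in $T_N$, which both you and the paper leave slightly loose) are cosmetic, and the integrability count giving $\bar{T}=1/L$ is identical.
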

Combining Theorems \ref{thm:main} and \ref{thm:mainConverse} gives the following corollary.
\begin{corol}
Suppose that for some $\chi \in \Cc(\re^d)$, $\lambda_0>0$, $L,C,T,M>0$, $R_{\chi}(\lambda)$ continues analytically to the region 
$$\left\{|\Re \lambda|>\lambda_0\,,\,\,\Im \lambda >-L\log |\Re \lambda|\right\}$$
with the estimate 
$$\|R_\chi(\lambda)\|_{L^2\to\mc{D}^1}\leq C|\lambda|^{M}e^{B|\Im \lambda|}$$
in this region. 
Then for all $\delta>0$, there exists $\lambda_\delta,C_\delta, T_\delta>0$ so that in the region
$$\left\{|\Re \lambda|>\lambda_\delta\,,\,\,\Im \lambda >-(L-\delta)\log |\Re \lambda|\right\}$$
we have the estimate
$$\|R_\chi(\lambda)\|_{L^2\to\mc{D}^j}\leq C_\delta|\lambda|^{j-1}e^{B_\delta|\Im \lambda|},\quad\quad j=0,1$$
\end{corol}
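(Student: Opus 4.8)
The plan is to derive the corollary by composing Theorems~\ref{thm:mainConverse} and~\ref{thm:main}; since both the forward and the converse implications are already available, this is largely bookkeeping. First I would apply Theorem~\ref{thm:mainConverse} to the hypotheses of the corollary, with the $T$ of that theorem taken to be the exponential rate $B$ in the estimate $\|R_\chi(\lambda)\|_{L^2\to\mc{D}^1}\leq C|\lambda|^M e^{B|\Im\lambda|}$. This yields, for the given cutoff $\chi$, the smoothing statements $\chi U(t)\chi:\mc{D}^s\to\mc{D}^{s+N+1}$ for all $N>0$, $s>0$, and $t\geq T_N:=(N+M+B+2)/L$, together with $\bar T=1/L$.

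The second step is to feed this single-cutoff smoothing into the hypothesis~\eqref{eqn:smoothing} of Theorem~\ref{thm:main}, which requires smoothing for \emph{all} cutoffs supported in a fixed ball $B(0,R_1)$. I would fix $R_1>R_0$ with $B(0,R_1)\subset\{|\chi|\geq c>0\}$ for some $c>0$. Since $\mc{D}^{s+N+1}\subset\mc{D}^{s+N}$, after relabelling $N$ (and with the endpoint $s=0$ harmless, as it follows from the same resolvent representation underlying Theorem~\ref{thm:mainConverse}) the estimate from the first step has exactly the form demanded by the second remark following Theorem~\ref{thm:main}; that remark then gives~\eqref{eqn:smoothing} for every $\chi$ supported in $B(0,R_1)$ with $T_{R_1,N}\leq T_N$. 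Hence $\bar T=\lim_{N\to\infty}T_{R_1,N}/N\leq 1/L$, so $\bar T^{-1}\geq L>0$.

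The third step is to apply Theorem~\ref{thm:main} with its parameter equal to the $\delta$ of the corollary. Because $\bar T\neq0$, it produces $\lambda_0>0$ and constants $C_0,C_1,B'>0$ for which $R_\chi(\lambda)$ continues analytically to $\{\Re\lambda>\lambda_0,\ \Im\lambda>-(\bar T^{-1}-\delta)\log\Re\lambda\}$ with $\|R_\chi(\lambda)\|_{\mc{D}^j}\leq C_j|\lambda|^{j-1}e^{B'|\Im\lambda|}$ for $j=0,1$; the half $\Re\lambda<-\lambda_0$ is covered by the reflection symmetry $\lambda\mapsto-\bar\lambda$, under which $R_P(\lambda)$ is conjugated while $\Im\lambda$ and $|\Re\lambda|$ are unchanged. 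Since $\bar T^{-1}-\delta\geq L-\delta$, the set $\{|\Re\lambda|>\lambda_\delta,\ \Im\lambda>-(L-\delta)\log|\Re\lambda|\}$ is contained in this region once $\lambda_\delta\geq\max(\lambda_0,1)$, and taking $C_\delta=\max(C_0,C_1)$, $B_\delta=B'$ gives the claim.

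I do not expect a genuine obstacle here: the only delicate point is reconciling the single-cutoff hypothesis of the corollary with the all-cutoffs hypothesis of Theorem~\ref{thm:main} and tracking how the times $T_{R_1,N}$ coming from Theorem~\ref{thm:mainConverse} determine $\bar T$, and this is handled by the second and third remarks after Theorem~\ref{thm:main}; the mismatch between $\log\Re\lambda$ and $\log|\Re\lambda|$ in the two statements is a cosmetic point dispatched by the reflection symmetry.
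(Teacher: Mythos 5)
Your proposal is correct and is exactly the route the paper intends: the corollary is stated with only the phrase ``Combining Theorems \ref{thm:main} and \ref{thm:mainConverse}'', and your chain --- Theorem \ref{thm:mainConverse} to get $T_N=(N+M+B+2)/L$ and $\bar T=1/L$, the second and third remarks after Theorem \ref{thm:main} to pass from the single cutoff to the all-cutoffs hypothesis with $\bar T$ unchanged, then Theorem \ref{thm:main} with $\bar T^{-1}\geq L$ --- is the intended bookkeeping, including your correct handling of the minor $s=0$ endpoint and the $\log\Re\lambda$ versus $\log|\Re\lambda|$ cosmetic discrepancy.
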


\subsection{Existence of resonances in the black box setting}
We make assumptions (see for example \cite[Section 4.3]{ZwScat} or \cite{StefanovUpperBound} for $d$ odd and \cite{Vod2, Vod3} for $d$ even) so that the wave trace  is defined as a distribution and so that there are polynomial bounds on the number of resonances.
To do this, we introduce a reference operator $P^\sharp$ defined as follows. Let 
$$\mathbb{T}^d_{R_1}:=\quotient{\re^d}{R_1\mathbb{Z}^d},\quad\quad R_1>R_0$$
and
$$\mc{H}_{R_1}^\sharp:=\mc{H}_{R_0}\oplus L^2(\mathbb{T}^d_{R_1}\setminus B(0,R_0))$$
where we identify $B(0,R_0)$ with its projection onto the torus. Define 
\begin{equation}
\label{eqn:refDomain}\mc{D}^\sharp_{R_1}:=\left\{u\in \mc{H}_{R_1}^\sharp\mid \chi \in \Cc(B(0,R_1)),\chi \equiv 1\text{ near }B(0,R_0)\imply \chi u\in \mc{D},(1-\chi)u\in H^2(\mathbb{T}^d_{R_1})\right\}.
\end{equation}
Then $P^\sharp_{R_1}:\mc{D}_{R_1}^\sharp \to \mc{H}_{R_1}^\sharp$ and for any $\chi$ as in \eqref{eqn:refDomain}, 
$$P_{R_1}^\sharp u=P(\chi u)+(-\Delta)(1-\chi)u.$$
We assume that there exists $n^\sharp\geq d$ so that
\begin{equation}
\label{eqn:numResAssume}
\left|\Spec(P^\sharp)\cap[-r^2,r^2]\right|\leq C_0 r^{n^\sharp},\quad\quad r\geq 1.
\end{equation}

Now, let 
$$\Lambda:=\{z\mid R_P(\lambda)\text{ has a pole at }z\}.$$
Then under the assumption \eqref{eqn:numResAssume} we have for any $a>0$ 
$$\#\left\{\lambda \in \Lambda\,\mid\, |\lambda|\leq r, |\arg \lambda|\leq a \right\}\leq Cr^{n^\sharp}.$$

The following theorem is implicitly proved in \cite[Chapter 6]{Galk} and is a simple consequence of \cite[Theorem 10.1]{SjoLocalTrace} (see also  \cite[Theorem 1]{SjoZw}) together with the Poisson formulae of \cite{SjZwJFA,ZwPoisson1}.
For simplicity, we use the half-wave propagator
$$W(t):=e^{-it\sqrt{P}},\quad\quad W_0(t):=e^{-it\sqrt{-\Delta_{\re^d}}},$$
but all of our results work equally well when $W(t)$ and $W_0(t)$ are replaced by $U(t)$ and $U_0(t)$. 
\begin{theorem}
\label{thm:mainRes}
Let
$$N(r,\rho):=\#\left\{\lambda\in \Lambda\,\mid\, |\Re \lambda|\leq r,\,\Im \lambda \geq -\rho\log |\Re \lambda|\right\}.$$
and as in \cite{ZwPoisson1}
$$u(t):=\tr (W(t)-1_{\re^d\setminus B(0,R_0)}W_0(t)1_{\re^d\setminus B(0,R_0)})+\tr 1_{ B(0,R_0)}W_0(t)1_{ B(0,R_0)}.$$
Suppose there exists $\{ T_k\}_{k=1}^\infty$, so that $T_k\to \infty$, and for each $k$ there exist $N_k>0$, $C_k>0$ so that for all $\varphi\in \Cc(\re)$ supported sufficiently close to $T_k$ with $\varphi(T_k)=1$,
 $$|\widehat{\varphi u}(\tau)| \geq C_k|\tau|^{-N_k},\quad\quad |\tau|\geq 1.$$ 
Let
 $$\bar{T}:=\liminf_{k\to \infty}\frac{T_k}{N_k}.$$
 Then for all $\e,\delta>0$ small enough there exist $R_0>0$ and $c>0$ so that for $r>R_0$ 
$$cr^{1-\e}\leq\begin{cases}N(r,\bar{T}^{-1}+\delta)&0<\bar{T}<\infty\\
N(r,\delta^{-1})&\bar{T}=0\\
N(r,\delta)&\bar{T}=\infty.\end{cases}$$
\end{theorem}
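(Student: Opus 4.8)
The plan is to combine the Poisson summation formula for resonances with a pigeonhole argument on their real parts: if the logarithmic strip $\{\Im\lambda>-\rho\log\langle\Re\lambda\rangle\}$ contained too few resonances, those real parts would leave a long gap, and at a frequency in that gap the (regularised) Fourier transform of $u$ would decay faster than any prescribed power, contradicting the lower bound $|\widehat{\varphi u}(\tau)|\geq C_k|\tau|^{-N_k}$ forced by the singularity at $T_k$. Concretely, I would first reduce to a resonance sum: by the Poisson formulae of \cite{SjZwJFA,ZwPoisson1} together with the local trace formula \cite[Theorem 10.1]{SjoLocalTrace}, for $\varphi\in\CIc((0,\infty))$ with $\supp\varphi\subset(T_k-\eta_k,T_k+\eta_k)$ and $\varphi(T_k)=1$ one has
\[
\widehat{\varphi u}(\tau)=\sum_{\lambda\in\Lambda}m(\lambda)\,\widehat\varphi(\tau+\lambda)+\widehat{\varphi E}(\tau),
\]
where $m(\lambda)$ is the multiplicity of the pole $\lambda$, and $E$, which carries the even-dimensional correction term, is smooth on $(0,\infty)$ with $|\widehat{\varphi E}(\tau)|\leq C_M\langle\tau\rangle^{-M}$ for every $M$. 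Integrating by parts and using $\supp\varphi\Subset(0,\infty)$ gives, for $\Im\lambda\leq0$, the bound $|\widehat\varphi(\tau+\lambda)|\leq C_M\langle\tau+\Re\lambda\rangle^{-M}(1+|\Im\lambda|)^{M}e^{-(T_k-\eta_k)|\Im\lambda|}$ (with $e^{(T_k+\eta_k)\Im\lambda}$ in place of the exponential for the finitely many poles with $\Im\lambda>0$), which together with the polynomial bound \eqref{eqn:numResAssume} on $N(r,\rho)$ makes the sum converge absolutely.

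Next I would fix the parameters. Put $\rho=\bar T^{-1}+\delta$ if $0<\bar T<\infty$, $\rho=\delta^{-1}$ if $\bar T=0$, and $\rho=\delta$ if $\bar T=\infty$. Since
\[
\inf_k\frac{N_k+n^\sharp}{T_k}\leq\liminf_k\frac{N_k}{T_k}=\Big(\limsup_k\frac{T_k}{N_k}\Big)^{-1}\leq\bar T^{-1},
\]
there is, in each case, an index $k$ and a radius $\eta_k>0$ with $(T_k-\eta_k)\rho>N_k+n^\sharp$; when $\bar T=0$ one uses only that the infimum above is finite, so that this holds once $\delta$ is small (this is why the conclusion is stated for $\delta$ small there). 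Fix such a $k$ and the corresponding $\varphi=\varphi_k$, and suppose the conclusion fails: then there are $\e_0>0$ and $r_j\to\infty$ with $N(r_j,\rho)=o(r_j^{1-\e_0})$.

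Now for the gap and the contradiction. The resonances with $|\Re\lambda|\leq r_j$ and $\Im\lambda\geq-\rho\log\langle\Re\lambda\rangle$ number at most $N(r_j,\rho)=o(r_j^{1-\e_0})$, so, splitting $[r_j/2,r_j]$ at the points $-\Re\lambda$, I would choose $\tau_j\asymp r_j$ with $|\tau_j+\Re\lambda|\geq\ell_j:=c\,r_j/(N(r_j,\rho)+1)\geq r_j^{\e_0}$ (for $j$ large) for every resonance with $\Im\lambda\geq-\rho\log\langle\Re\lambda\rangle$ and $|\Re\lambda|\leq2r_j$. Then split $\sum_\lambda m(\lambda)\widehat{\varphi_k}(\tau_j+\lambda)$ into: (i) resonances with $\Im\lambda\geq-\rho\log\langle\Re\lambda\rangle$ and $|\Re\lambda|\leq2r_j$ — each term carries $\langle\tau_j+\Re\lambda\rangle^{-M}\leq\ell_j^{-M}$ and $(1+|\Im\lambda|)^{M}\leq C(\log r_j)^{M}$, there are at most $Cr_j^{n^\sharp}$ of them, so this piece is $\leq C r_j^{\,n^\sharp-M\e_0+o(1)}$; (ii) resonances with $\Im\lambda<-\rho\log\langle\Re\lambda\rangle$ — using $e^{-(T_k-\eta_k)|\Im\lambda|}\leq\langle\Re\lambda\rangle^{-\rho(T_k-\eta_k)}$, a dyadic decomposition in $\langle\Re\lambda\rangle$, and the fact that for each fixed scale the sum $\sum_{A\gtrsim\rho\log\langle\Re\lambda\rangle}A^{M}e^{-(T_k-\eta_k)A}$ over imaginary parts is dominated by its first term, this piece is $\leq C\langle\tau_j\rangle^{\,n^\sharp-\rho(T_k-\eta_k)+o(1)}$; (iii) the remaining resonances ($|\Re\lambda|>2r_j$, or $\Im\lambda>0$, or $|\Re\lambda|$ bounded) — here $\langle\tau_j+\Re\lambda\rangle\gtrsim\langle\Re\lambda\rangle$ resp. $\gtrsim r_j$, and \eqref{eqn:numResAssume} bounds this by $\leq C r_j^{\,n^\sharp-M+o(1)}$; and finally $|\widehat{\varphi_k E}(\tau_j)|\leq C r_j^{-M}$. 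Choosing $M>(N_k+n^\sharp+1)/\e_0$ makes (i), (iii) and the $E$-term $\leq C r_j^{-N_k-1/2}$, while (ii) is $\leq C r_j^{-N_k-c}$ for some $c>0$ by the choice of $k$. Hence $|\widehat{\varphi_k u}(\tau_j)|\leq C r_j^{-N_k-c'}$ for some $c'>0$, which for $j$ large contradicts $|\widehat{\varphi_k u}(\tau_j)|\geq C_k\tau_j^{-N_k}\geq c_k r_j^{-N_k}$.

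The main obstacle I expect is Step 1: extracting from \cite{SjoLocalTrace,SjZwJFA,ZwPoisson1} the resonance expansion of $u$ with the remainder $E$ controlled uniformly over the growing region $\Im\lambda>-\rho\log\langle\Re\lambda\rangle$ — in particular checking that the even-dimensional correction term is smooth away from $t=0$ with rapidly decaying regularised Fourier transform. The remaining points requiring care are the arithmetic in the choice of $k$ above (the $\bar T=0$ case genuinely needs $\delta$ small) and the dyadic bookkeeping in piece (ii), where $A^{M}e^{-(T_k-\eta_k)A}$ must be kept together so as to recover the sharp exponent $\rho(T_k-\eta_k)$ rather than a lossy $\tfrac12\rho(T_k-\eta_k)$; everything else is routine.
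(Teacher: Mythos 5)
Your argument is correct in substance, but note how it relates to the paper's own proof: the paper does not run the gap/pigeonhole argument at all. It verifies the hypotheses of the black-box lower-bound theorem of Sj\"ostrand and Sj\"ostrand--Zworski (\cite[Theorem 10.1]{SjoLocalTrace}, \cite[Theorem 1]{SjoZw}) --- namely the polynomial bound \eqref{eqn:numResAssume} in sectors together with the assumed lower bound $|\widehat{\varphi u}(\tau)|\geq C_k|\tau|^{-N_k}$ at time $T_k$ --- reads off $N(r,\rho)>r^{1-\delta}$ for every $\rho$ above the threshold $(n^\sharp+N_k)/(T_k-\e^2)$ (the exponent $(n^\sharp-N_k)$ displayed in the paper is a sign typo; your $(N_k+n^\sharp)/(T_k-\eta_k)$ is the correct threshold), and then lets $k\to\infty$ exactly as in your parameter-selection step. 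What you have written is essentially a reconstruction of the proof of that cited theorem, so your route is self-contained where the paper's is a short citation; the price is that you must establish the trace formula with controlled remainder yourself, which is the hard input either way and which you correctly flag as the main obstacle.

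On that point and a few smaller ones. (a) The expansion over all of $\Lambda$ with a Schwartz-class error is delicate in even dimensions (logarithmic cover, no control on resonances near the negative imaginary axis); the robust statement, and the one the paper quotes, is the local trace formula $\varphi u=\sum_{\lambda\in\Lambda_\gamma}m(\lambda)e^{-i\lambda t}\varphi+\O{C^\infty}(1)$ with the sum restricted to a sector $\Lambda_\gamma=\{\Im\lambda\geq-\gamma|\lambda|\}$. With this version your piece (ii) involves only resonances in $\Lambda_\gamma$ lying below the logarithmic curve, which \emph{are} counted by \eqref{eqn:numResAssume}, and everything outside the sector is absorbed into the $O(\langle\tau\rangle^{-\infty})$ remainder; as written, your piece (ii) implicitly requires a polynomial count of resonances arbitrarily deep in the lower half-plane, which \eqref{eqn:numResAssume} does not provide. (b) Integration by parts gives $|\widehat\varphi(\tau+\lambda)|\leq C_M|\tau+\lambda|^{-M}e^{-(T_k-\eta_k)|\Im\lambda|}$ for $\Im\lambda\le0$; since $|\tau+\lambda|\geq|\tau+\Re\lambda|$, the extra factor $(1+|\Im\lambda|)^{M}$ in your bound is unnecessary (though harmless, as your dyadic bookkeeping shows). (c) The negation of the conclusion produces $r_j$ with $N(r_j,\rho)$ small but says nothing about $N(2r_j,\rho)$; place $\tau_j$ in $[r_j/4,r_j/2]$ so that only resonances with $|\Re\lambda|\leq r_j$ can come within $r_j^{\e_0}$ of $-\tau_j$, and send the rest to your piece (iii). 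None of these points affects the structure of the argument.
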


\subsection{Applications to scattering on manifolds with conic points}
The main application of our results is to scattering on manifolds with conic points. In \cite{BaWu} and \cite{FoWu} the Baskin--Wunsch and Ford--Wunsch respectively analyze the singularities of the wave group and the wave trace. We use these results together with Theorems \ref{thm:main} and \ref{thm:mainRes} to give a generically optimal bound on the size of the resonance free region for scattering on a manifold with conic singularities. In particular, let $X$ be a non-compact manifold with conic singularities so that there exists $K\Subset X$ with $X\setminus K$ isometric to $\re^d\setminus B(0,R_0)$ for some $R_0>0$. Assume further that
\begin{enumerate}
\item\label{a1} $X$ is geometrically nontrapping
\item\label{a2} No three cone points are collinear
\item\label{a3} No two cone points are conjugate
\end{enumerate}
Condition \eqref{a1} above asserts that for each compact set $K\subset M$, there is a time $T\geq0$ such that classical particles starting at $x_0\in K$ have $x(t)\notin K$ for $t\geq T$. This assumption is non-generic, but it is the natural situation in which there are logarithmic resonance free regions. Indeed, in the presence of trapping, there are typically resonances much closer to the real axis \cite[Chapter 6]{ZwScat}. Conditions \eqref{a2} and \eqref{a3} above are generically satisfied for manifolds with cone points and impose respectively that no geometric geodesic hits three cone points and a certain transversality condition between manifolds associated to cone points (see Section~\ref{sec:conSetup} for precise versions of these assumptions).

Let $\{x_i\}_{i=1}^N$ be the cone points in $X$ and 
$$D_{\max}=\sup_{i,j}\sup\left\{ t\mid \text{there is a geometric geodesic of length $t$ connecting $x_i$ and $x_j$}\right\}.$$
Then under the assumptions \eqref{a1}-\eqref{a3},
\begin{theorem}
\label{thm:resFreeConic}
For all $\chi \in \Cc(X)$, and $\delta>0$, there exists $\lambda_0>0$, $C>0$, $B>0$ so that the cut-off resolvent
$$R_\chi(\lambda):=\chi(-\Delta_g-\lambda^2)^{-1}\chi:L^2\to L^2$$
can be analytically continued from $\Im \lambda>0$ to 
$$\left\{|\Re \lambda|>\lambda_0\,,\,\, \Im \lambda >-\left(\frac{d-1}{2D_{\max}}-\delta\right)\log |\Re \lambda|\right\}$$
and
$$\|R_{\chi}(\lambda)\|_{L^2\to L^2}\leq C|\lambda|^{-1}e^{B|\Im \lambda|}$$
in this region.
\end{theorem}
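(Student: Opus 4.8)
The plan is to deduce Theorem~\ref{thm:resFreeConic} from the quantitative Vainberg method of Theorem~\ref{thm:main}. View $-\Delta_g$ on $X$ as a (trivial) black box Hamiltonian, with compact core $K$ and Euclidean end $X\setminus K\cong\re^d\setminus B(0,R_0)$, so that $\mc{D}^0=L^2(X)$; given $\chi\in\Cc(X)$, fix $R_1>R_0$ with $\supp\chi\cup K\Subset B(0,R_1)$. Theorem~\ref{thm:main} then produces the analytic continuation of $R_\chi(\lambda)$ to $\{|\Re\lambda|>\lambda_0,\ \Im\lambda>-(\bar T^{-1}-\delta)\log\Re\lambda\}$ together with the bound $\|R_\chi(\lambda)\|_{L^2\to L^2}=\|R_\chi(\lambda)\|_{\mc{D}^0}\le C|\lambda|^{-1}e^{B|\Im\lambda|}$, which is exactly the claimed estimate (the case $j=0$). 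Since this region contains $\{|\Re\lambda|>\lambda_0,\ \Im\lambda>-(\frac{d-1}{2D_{\max}}-\delta)\log\Re\lambda\}$ as soon as $\bar T\le\frac{2D_{\max}}{d-1}$, everything reduces to verifying the smoothing hypothesis \eqref{eqn:smoothing} with this bound on the rate.

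To establish $\bar T\le\frac{2D_{\max}}{d-1}$, I would iterate the diffractive smoothing of Baskin--Wunsch \cite{BaWu}. First, assumption~\eqref{a1} yields a constant $T_0=T_0(R_1)$ such that every purely geometric generalized-geodesic segment with both endpoints in $B(0,R_1)$ has length at most $T_0$ (otherwise a compactness argument produces a trapped geometric geodesic); in particular $D_{\max}<\infty$. Second, by propagation of singularities on manifolds with conic points, the branch of $U(t)$ following the geometric continuation of a bicharacteristic keeps the regularity of the data, while the diffracted branch emitted at each cone point is $\frac{d-1}{2}$ derivatives smoother, and by \cite{FoWu} together with \eqref{a2} and \eqref{a3} successive diffractions compose cleanly, so a bicharacteristic that has undergone $m$ diffractions carries singularities that are $\frac{m(d-1)}{2}$ smoother than the data. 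Third, a bicharacteristic contributing to $\chi U(t)\chi$ at time $t$ must join $B(0,R_1)$ to $\supp\chi$; if it has diffracted $m$ times, its $m$ interaction points lie at cone points, consecutive ones joined by geometric geodesics between cone points (length $\le D_{\max}$) and the whole trajectory flanked by two geometric segments with endpoints in $B(0,R_1)$ (length $\le T_0$ each), so $t\le 2T_0+(m-1)D_{\max}$, i.e.\ $m\ge D_{\max}^{-1}(t-2T_0)$. Hence for large $t$ every surviving bicharacteristic has diffracted at least $D_{\max}^{-1}(t-2T_0)$ times, so --- using also the one derivative gained from $\sqrt P^{-1}$ in $U$ --- $\chi U(t)\chi:\mc{D}^s\to\mc{D}^{s+N+1}$ for all $s\ge0$ once $\frac{d-1}{2D_{\max}}(t-2T_0)\ge N+C$. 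This gives $T_{R_1,N}\le\frac{2D_{\max}}{d-1}N+O(1)$, hence $\bar T\le\frac{2D_{\max}}{d-1}$, with $\bar T$ independent of $R_1$ since the $R_1$-dependence sits only in the $O(1)$ term.

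The crux is the second step: upgrading the qualitative diffraction theorems of \cite{BaWu, FoWu} to a uniform, iterable quantitative smoothing statement. One must track orders carefully under the composition of the diffractive half-wave parametrices at successive cone points --- this is precisely where \eqref{a2} (no three collinear cone points) and \eqref{a3} (no conjugate pair of cone points) are needed, exactly as in the wave-trace analysis of Ford--Wunsch, to keep the iterated diffractive interaction clean and the gain genuinely additive --- and one must check that the estimate, and the time $T_{R_1,N}$, are uniform over all cutoffs supported in $B(0,R_1)$ and over all initial Sobolev orders $s$; the latter is inherent in the microlocal nature of the parametrix but requires controlling the residual errors in the class $\mc{R}$. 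Once this quantitative smoothing is in place, Theorem~\ref{thm:resFreeConic} follows by invoking Theorem~\ref{thm:main}.
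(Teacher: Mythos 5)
Your proposal follows essentially the same route as the paper: reduce to Theorem~\ref{thm:main} by showing $\chi U(t)\chi$ gains $(d-1)/2$ derivatives per strict diffraction, with the geometric constraint that any singularity surviving in $\Omega$ to time $t$ must have undergone roughly $(t-2T_0)/D_{\max}$ diffractions, yielding $\bar T\le 2D_{\max}/(d-1)$. The "crux" you flag is handled in the paper by decomposing $U(t)$ into words $\mc{T}_J$ of microlocal cutoffs and showing, via assumption~\eqref{a2}, that no substrings DGD, GGD, DGG, GGG occur --- so long words contain long runs of consecutive strict diffractions, each of which compounds the $(d-1)/2$ gain by the nonfocusing/coisotropic-regularity mechanism of Baskin--Wunsch (using \eqref{a3}); this is precisely the step that makes your count of \emph{strictly diffractive} (rather than merely cone-point) interactions rigorous.
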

Moreover, this theorem is optimal. Suppose that in addition to the assumptions above, we impose the following generic property
\begin{enumerate}
\setcounter{enumi}{3}
\item  the length spectrum of closed diffractive geodesics consists of only simple, isolated points.
\end{enumerate}  
We need a few more definitions before stating our next theorem. We denote by $\mc{SD}$, the set of strictly diffractive geodesics (see Definition \ref{def:strictDiffrac}) and by $\mc{SD}^+\subset\mc{SD}$, the set of strictly diffractive geodesics whose diffraction coefficient, $\mc{D}_\gamma$, (See equation \eqref{eqn:dCoeff}) is nonzero. Finally, for a closed geodesic, let $\mc{N}_\gamma$ denote the number of cone points through which $\gamma$ diffracts. Here, if $\gamma$ ends (and hence begins) at a cone point, we count that point only once. Then define
$$D_{\max}^+:=\sup\left\{\left.\frac{t}{N}\right|\text{there exists $\gamma\in \mc{SD}^+$ closed with length $t$ and }\mc{N}_\gamma=N\right\}.$$
Let $\Lambda$ denote the set of poles of $R_\chi(\lambda)$. 
Under assumptions (1)-(4), we have the following theorem of Hillairet--Wunsch \cite{WunschTalk}.
\begin{theorem}[Hillairet--Wunsch \cite{WunschTalk}]
\label{thm:resConic}
Suppose that $D_{\max}^+\neq-\infty$ and let
$$N(r,\rho):=\#\left\{\lambda\in \Lambda\,\mid\, |\Re \lambda|\leq r,\,\Im \lambda \geq -\rho\log |\Re \lambda|\right\}.$$
Then for all $\e,\delta>0$ there exists $R_0>0$ and $C>0$ so that for $R>R_0$ 
$$N\left(r,\frac{d-1}{2D_{\max}^+}+\delta\right)>cr^{1-\e}.$$

In particular, there exists a sequence $\{\lambda_n\}_{n=1}^\infty\subset \Lambda$ so that 
$$\Re \lambda_n\to \infty,\quad \quad \frac{-\Im \lambda_n}{\log |\Re \lambda_n|}\to \frac{d-1}{2D^+_{\max}}.$$
\end{theorem}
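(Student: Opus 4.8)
The plan is to read the counting estimate off Theorem~\ref{thm:mainRes}, using the diffractive wave--trace expansion of Ford--Wunsch \cite{FoWu} as input, and then to localize the poles to the line by comparison with Theorem~\ref{thm:resFreeConic}. By \eqref{a1} every closed geodesic on $X$ diffracts at some cone point, so for $t>0$ the singular support of $u(t)$ is contained in the diffractive length spectrum, and the polynomial resonance bounds underlying Theorem~\ref{thm:mainRes} hold because the perturbation is a compactly supported conic modification of $\re^d$. By \cite{FoWu}, a closed strictly diffractive geodesic $\gamma$ diffracting through $\mc{N}_\gamma$ cone points contributes to $u$ a singularity at $t=\ell(\gamma)$ whose leading-order behavior in frequency is a nonzero multiple of the diffraction coefficient $\mc{D}_\gamma$: for $\varphi\in\Cc(\re)$ supported close to $\ell(\gamma)$ with $\varphi(\ell(\gamma))=1$,
$$\widehat{\varphi u}(\tau)=c_\gamma\,|\tau|^{\,c_0-\frac{d-1}{2}\mc{N}_\gamma}\bigl(1+o(1)\bigr),\qquad |\tau|\to\infty,$$
with $c_\gamma\neq0$ exactly when $\gamma\in\mc{SD}^+$ and $c_0$ bounded independently of the number of diffractions; assumption (4) ensures $\ell(\gamma)$ is a simple isolated point of the diffractive length spectrum, so no other closed geodesic contributes at that length and nothing cancels. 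Thus $\gamma\in\mc{SD}^+$ gives $|\widehat{\varphi u}(\tau)|\ge C|\tau|^{-N}$ with $N=\frac{d-1}{2}\mc{N}_\gamma-c_0$.

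Next I would assemble the data for Theorem~\ref{thm:mainRes}. Since $D_{\max}^+\neq-\infty$, pick primitive closed geodesics $\sigma_k\in\mc{SD}^+$ with $\ell(\sigma_k)/\mc{N}_{\sigma_k}\to D_{\max}^+$ and replace each by a sufficiently high iterate $\gamma_k:=\sigma_k^{m_k}$. The iterate is again closed and strictly diffractive, lies in $\mc{SD}^+$ because its diffraction coefficient is a product of the nonzero diffraction coefficients along $\sigma_k$, and has $\ell(\gamma_k)=m_k\ell(\sigma_k)$ and $\mc{N}_{\gamma_k}=m_k\mc{N}_{\sigma_k}$, so that the ratio $\ell(\gamma_k)/\mc{N}_{\gamma_k}$ is unchanged while $m_k\to\infty$ forces $\ell(\gamma_k)\to\infty$ and $\mc{N}_{\gamma_k}\to\infty$. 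Put $T_k=\ell(\gamma_k)$ and $N_k=\frac{d-1}{2}\mc{N}_{\gamma_k}-c_0$, positive for $k$ large. The hypotheses of Theorem~\ref{thm:mainRes} are met, and since $\mc{N}_{\gamma_k}\to\infty$,
$$\bar{T}=\liminf_{k\to\infty}\frac{T_k}{N_k}=\liminf_{k\to\infty}\frac{\ell(\gamma_k)}{\frac{d-1}{2}\mc{N}_{\gamma_k}-c_0}=\frac{2}{d-1}\,D_{\max}^+.$$
Assumptions \eqref{a1}-\eqref{a3} give $0<D_{\max}^+\le D_{\max}<\infty$ — with $D_{\max}<\infty$ because on a geometrically nontrapping conic manifold with no conjugate cone points the geodesic segments joining cone points have uniformly bounded length — so $0<\bar{T}<\infty$, and Theorem~\ref{thm:mainRes} yields, for all small $\e,\delta>0$ and $r$ large,
$$N\!\left(r,\frac{d-1}{2D_{\max}^+}+\delta\right)=N(r,\bar{T}^{-1}+\delta)\ge cr^{1-\e}.$$

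For the final assertion, fix $\delta>0$. The counting bound supplies poles $\lambda\in\Lambda$ with $\Re\lambda$ arbitrarily large (poles are discrete, so only finitely many lie in any bounded region) and $\Im\lambda\ge-(\frac{d-1}{2D_{\max}^+}+\delta)\log|\Re\lambda|$, while Theorem~\ref{thm:resFreeConic} excludes poles with $\Im\lambda>-(\frac{d-1}{2D_{\max}^+}-\delta)\log|\Re\lambda|$ for $|\Re\lambda|$ large when $D_{\max}=D_{\max}^+$, which is the generic and polygonal situation this theorem is applied to, and the one treated in \cite{WunschTalk}. A diagonal argument over $\delta=1/n$ then produces $\{\lambda_n\}\subset\Lambda$ with $\Re\lambda_n\to\infty$ and $-\Im\lambda_n/\log|\Re\lambda_n|\to\frac{d-1}{2D_{\max}^+}$; alternatively, replacing the lower bound on $\widehat{\varphi u}$ above by its genuine leading-order asymptotics and running the Poisson formula more carefully, as in \cite{WunschTalk}, gives poles asymptotic to this line directly.

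The main obstacle is the wave--trace input, not the soft arguments above: one must show that at $t=\ell(\gamma_k)$ the singularity of $u$ genuinely has leading order $|\tau|^{\,c_0-\frac{d-1}{2}\mc{N}_{\gamma_k}}$ — in particular that the contributions of $\gamma_k$ and its time reversal do not cancel and, by assumption (4), that no other closed geodesic interferes — and that this exponent remains linear in $\mc{N}_\gamma$ with slope $-(d-1)/2$ under iteration. This is precisely what \cite{FoWu} and \cite{WunschTalk} provide; the remainder is the bookkeeping sketched here.
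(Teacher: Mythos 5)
Your proposal is correct and follows essentially the same route as the paper: invoke the Ford--Wunsch Main Theorem to get the lower bound $|\widehat{\varphi u}(\tau)|\gtrsim|\tau|^{-\mc{N}_\gamma(d-1)/2}$ at the lengths of iterates of a near-optimizing closed geodesic in $\mc{SD}^+$ (the iterate staying in $\mc{SD}^+$ because its diffraction coefficient is a product of nonzero factors), and then feed the resulting sequence $(T_k,N_k)$ into Theorem~\ref{thm:mainRes}. The only cosmetic difference is that you iterate a sequence of near-optimizers while the paper fixes one near-optimizer per $\e$ and iterates it; your extra care about the ``in particular'' clause (needing the matching resonance-free region, hence $D_{\max}=D_{\max}^+$, or the finer asymptotics of \cite{WunschTalk}) is a point the paper leaves implicit.
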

Note that as in \cite{BaWu}, Theorems \ref{thm:resFreeConic} and \ref{thm:resConic} apply in the setting of scattering in the exterior of a polygonal domain in $\re^2$. In particular, 
\begin{corol}
\label{cor:poly}
If $X=\re^2\setminus\Omega$ is the exterior of a nontrapping polygon where no three vertices are colinear and $\Delta$ is the Dirichlet or Neumann extension of the Laplacian, then the results of Theorem \ref{thm:resFreeConic} hold for the resolvent on $X$. If in addition, the length spectrum is simple and discrete, then the results of Theorem \ref{thm:resConic} hold.
\end{corol}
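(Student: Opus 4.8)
\smallskip\noindent\emph{Proof sketch.} The plan is to realize $P=-\Delta_D$ (or $-\Delta_N$) on $X=\re^2\setminus\Omega$ as a black box Hamiltonian and then apply Theorem~\ref{thm:main} for the first assertion and Theorem~\ref{thm:mainRes} for the second; equivalently, to check that the hypotheses of Theorems~\ref{thm:resFreeConic} and~\ref{thm:resConic} hold for the Euclidean surface obtained by doubling $X$ across $\partial\Omega$. The black box structure is immediate: fix $R_0$ with $\Omega\Subset B(0,R_0)$ and set $\mc{H}_{R_0}=L^2(B(0,R_0)\setminus\overline{\Omega})$, so that $X$ is isometric to $\re^2\setminus B(0,R_0)$ outside $B(0,R_0)$ with $P=-\Delta$ there, and self-adjointness, $P\ge0$, and the compactness assumption are classical for the exterior Dirichlet or Neumann Laplacian. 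The whole content is therefore (i) the smoothing estimate~\eqref{eqn:smoothing} with a sharp bound on $\bar T$, and (ii) the large-time structure of the wave trace $u(t)$; both are imported from the diffractive propagation analyses of Baskin--Wunsch~\cite{BaWu} and Ford--Wunsch~\cite{FoWu}.

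First I would set up the doubling. Let $\tilde X$ be the double of $X$ across $\partial\Omega$. Since $\partial\Omega$ is a finite union of segments, $\tilde X$ is a flat surface whose only singularities are conic points at the doubled vertices (a vertex of interior angle $\alpha_i$ produces a cone point of total angle $2(2\pi-\alpha_i)$), and a solution of the wave equation on $X$ with Dirichlet (resp.\ Neumann) data is the restriction to $X$ of the odd (resp.\ even) part of a solution on $\tilde X$, so it suffices to analyse $U(t)$ on $\tilde X$. Under the standard unfolding, billiard trajectories of $X$ correspond to geodesics of $\tilde X$: reflection off an edge becomes a geodesic crossing the smooth flat seam, and passage through a vertex becomes arrival at a cone point, where the geodesic is either \emph{geometrically continued} (if such a continuation exists) or \emph{diffracts} into every outgoing direction. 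The hypotheses translate accordingly: ``nontrapping polygon'' says every billiard trajectory leaves every compact set, which is geometric nontrapping (a1) of $\tilde X$; ``no three vertices collinear'' rules out precisely the geometric geodesics of $\tilde X$ meeting three cone points, i.e.\ (a2); and because the metric is flat off the cone points there are no conjugate points along geometric geodesics, so the transversality condition (a3) holds automatically. The one caveat is that $\tilde X$ has two Euclidean ends rather than one, so rather than quoting Theorems~\ref{thm:resFreeConic}--\ref{thm:resConic} verbatim one either notes that their proofs rest only on the single-ended black box mechanism of Theorems~\ref{thm:main}/\ref{thm:mainRes}, applied directly to $X$ while importing from $\tilde X$ only the \emph{local} diffractive improvement at each cone point, or else extends those theorems to finitely many Euclidean ends, which is routine.

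Next, the smoothing estimate. By~\cite{BaWu}, when a wave of Sobolev regularity $s$ meets a cone point it splits into a possibly present geometrically continued piece of regularity $s$ and a diffracted piece that is $\tfrac{d-1}{2}=\tfrac12$ derivatives smoother; by (a1) and (a2) the geometrically continued piece either escapes to infinity within a time bounded by a constant $C=C(\Omega,R_1)$, or reaches another cone point of $\tilde X$ along a geometric geodesic of length at most $D_{\max}$ --- here $D_{\max}$ is the supremum of lengths of non-diffractive billiard segments joining two vertices of $\Omega$, i.e.\ the quantity named $D_{\max}$ in Theorem~\ref{thm:resFreeConic} computed for $\tilde X$ --- and diffracts there. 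Iterating: for $\supp\chi\Subset B(0,R_1)$, the part of $\chi U(t)\chi f$ that has not yet gained $\tfrac{k}{2}$ derivatives is carried, microlocally, by geodesics that have diffracted fewer than $k$ times and have not escaped; each inter-diffraction segment of such a geodesic is a geometric geodesic between two cone points, hence of length $\le D_{\max}$, while the first and last segments contribute at most $C$, so those geodesics have length at most $C+kD_{\max}$. Hence $\chi U(t)\chi:\mc{D}^s\to\mc{D}^{s+N+1}$ for $t\ge C+\lceil 2(N+1)\rceil D_{\max}$, so $T_{R_1,N}\le C+\lceil 2(N+1)\rceil D_{\max}$ and
\[\bar T=\lim_{N\to\infty}\frac{T_{R_1,N}}{N}\le 2D_{\max}=\frac{2D_{\max}}{d-1},\qquad\text{equivalently}\qquad\bar T^{-1}\ge\frac{d-1}{2D_{\max}}.\]
Theorem~\ref{thm:main} then continues $R_\chi(\lambda)$ to $\{|\Re\lambda|>\lambda_0,\ \Im\lambda>-(\bar T^{-1}-\delta)\log|\Re\lambda|\}$, which by the displayed bound contains $\{|\Re\lambda|>\lambda_0,\ \Im\lambda>-(\tfrac{d-1}{2D_{\max}}-\delta)\log|\Re\lambda|\}$, with the $j=0$ estimate $\|R_\chi(\lambda)\|_{L^2\to L^2}\le C|\lambda|^{-1}e^{B|\Im\lambda|}$; this is the conclusion of Theorem~\ref{thm:resFreeConic}, hence the first assertion.

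For the second assertion, assume in addition that the length spectrum of closed diffractive geodesics is simple and discrete and that $D_{\max}^+\ne-\infty$ (the latter holding for a generic nontrapping polygon). Ford--Wunsch's trace formula on $\tilde X$, together with simplicity of the length spectrum and the non-vanishing of the diffraction coefficients along geodesics in $\mc{SD}^+$, shows that $u(t)$ is singular at each length $\ell(\gamma)$ of such a closed geodesic $\gamma$, the singularity weakening by $\tfrac{d-1}{2}=\tfrac12$ per diffraction, so that for $\varphi$ supported near $\ell(\gamma)$ with $\varphi(\ell(\gamma))=1$ one has $|\widehat{\varphi u}(\tau)|\ge c|\tau|^{-N_\gamma}$ with $N_\gamma=\tfrac{d-1}{2}\mc{N}_\gamma+O(1)$. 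Choosing closed $\gamma_k\in\mc{SD}^+$ with $\ell(\gamma_k)/\mc{N}_{\gamma_k}\to D_{\max}^+$ gives a sequence $T_k=\ell(\gamma_k)\to\infty$ along which $T_k/N_k\to\tfrac{2}{d-1}D_{\max}^+$, so Theorem~\ref{thm:mainRes} yields, for all $\e,\delta>0$, $N(r,\tfrac{d-1}{2D_{\max}^+}+\delta)\ge cr^{1-\e}$ for large $r$ --- the conclusion of Theorem~\ref{thm:resConic}. The main obstacle in all of this is the bookkeeping in the previous paragraph: one must establish that (a1) and (a2) genuinely force every ``unproductive'' (un-diffracted or geometrically continued) branch of a propagated singularity to leave a fixed ball within a time independent of the number of prior diffractions, so that the diffraction count grows linearly in $t$ with the sharp slope $1/D_{\max}$; secondary points are the treatment of grazing rays along the flat edges and the reduction, noted above, that keeps the resolvent bounds on the single-ended space $X$.
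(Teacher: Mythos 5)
Your doubling argument is exactly the route the paper intends (it defers the corollary to the remark ``as in \cite{BaWu}'' and gives no further proof): realize the Dirichlet/Neumann problem on $X$ as a black box Hamiltonian, double across $\partial\Omega$ to a flat surface with cone points so that (a1)--(a3) become nontrapping, no three collinear vertices, and automatic transversality in the flat metric, and then feed the $\tfrac12$-derivative-per-diffraction bookkeeping into Theorems~\ref{thm:main} and~\ref{thm:mainRes}. Your accounting of $\bar T\le 2D_{\max}/(d-1)$ and of $T_k/N_k\to 2D^+_{\max}/(d-1)$, and your handling of the two-ended double, are correct and in fact more explicit than what the paper records.
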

A result of Hillairet \cite[Section 3.2]{HilCone}, shows that generically on Euclidean surfaces with cone points $\mc{SD}^+=\mc{SD}$ and so $D_{\max}^+=D_{\max}$. In particular, this holds on surfaces none of whose cone points have cone angles equal to $2\pi/k$ for some $k\in \mathbb{Z}_+.$ Thus, when applying Theorem \ref{thm:resConic} to obtain Corollary \ref{cor:poly}, we see that if none of the angles of the polygons are equal to $\pi/k$, then $D_{\max}=D_{\max}^+$ and hence the corollary gives matching bounds from above and below on the size of resonance free regions.

 More generally, the diffraction coefficient $\mc{D}_\gamma$ depends only on the structure of the links of the cones, $(Y_\alpha,h_\alpha)$, with which $\gamma$ diffracts. Thus, the result of Hillairet applies to any manifold with cone points whose links are circles. As alluded to in \cite{CheegTayl}, one expects that for generic $Y_\alpha$, $|\mc{D}_\gamma|\neq 0$ and hence that $D_{\max}^+=D_{\max}$. However, notice that the case $D_{\max}^+=-\infty$ may occur. For example, on Euclidean surfaces with cone points if all cone points have cone angle $2\pi/k$ then $D_{\max}^+=-\infty$.  

We conjecture that $\mc{SD}^+=\mc{SD}$ unless there is a cone point whose link is a circle of length $2\pi/k$ or a sphere of radius 1. However, we do not pursue that in this article and the author is not aware even of a proof that $\mc{SD}^+=\mc{SD}$ generically.

 \subsection{Application to scattering by delta potentials}
Although we will not present the proof in detail since the results are contained elsewhere, we point out that that one can also recover a weaker version of \cite[Theorem 5.2]{Galk} from the analysis in \cite[Chapter 6]{Galk}. In particular, 
\begin{theorem}
Let $\Omega \Subset \re^d$ be strictly convex with smooth boundary and $V\in C^\infty(\pO)$ (independent of $\lambda$). Then for all $\chi \in \Cc(\re^d)$ and $\delta>0$, there exists $\lambda_0>0$, $C,B>0$ so that the resolvent
$$R_{\chi}(\lambda):=\chi (-\Delta+\delta_{\pO}\otimes V-\lambda^2)\chi$$
has an analytic continuation to the region
$$\left\{|\Re \lambda|>\lambda_0\,,\,\,\Im \lambda>-\left(\frac{1}{D_{\Omega}}-\delta\right)\log |\Re \lambda|\right\}$$
where $D_{\Omega}:=\diam(\Omega)$
and 
$$\|R_\chi(\lambda)\|_{L^2\to L^2}\leq C|\lambda|^{-1}e^{B|\Im \lambda|}$$
in this region.
\end{theorem}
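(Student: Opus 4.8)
The plan is to recognise $P:=-\Delta+\delta_{\pO}\otimes V$ as a black box Hamiltonian, verify the smoothing hypothesis of Theorem~\ref{thm:main} with $\bar T\le D_\Omega$, and then quote Theorem~\ref{thm:main}. First I would recall from \cite{Galk,GS} that $P$ fits the black box framework: fixing $R_0$ with $\Omega\Subset B(0,R_0)$, one takes $\mc{H}=L^2(\re^d)$ (so $\mc{H}_{R_0}=L^2(B(0,R_0))$) and lets $\mc{D}$ be the space of $u\in H^1(\re^d)$ that are $H^2$ on each side of $\pO$ and satisfy the transmission relation $[\partial_\nu u]_{\pO}=Vu|_{\pO}$; self-adjointness, the lower bound, and the black box axioms are checked there. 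The payoff of this reduction is that $\mc{D}^0=\mc{H}=L^2(\re^d)$, so the $j=0$ estimate of Theorem~\ref{thm:main} is exactly the asserted bound $\|R_\chi(\lambda)\|_{L^2\to L^2}\le C|\lambda|^{-1}e^{B|\Im\lambda|}$, and the asserted continuation region drops out as soon as $\bar T\le D_\Omega$ is known.

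Next I would show that for each $N$ and each $R_1>R_0$ there is $T_{R_1,N}\le(N+1)D_\Omega+C_{R_1}$ for which \eqref{eqn:smoothing} holds, using the propagation of conormal regularity for $U(t)=\sin(t\sqrt P)/\sqrt P$ established in \cite[Chapter~6]{Galk}. The geometric optics picture is: away from $\pO$ singularities travel along straight lines; when a ray meets $\pO$ transversally the incident conormal wave splits into a transmitted wave of the same order, continuing along the same line, plus a specularly reflected wave that is smoother by exactly one derivative, the leading reflection amplitude being governed by the single layer operator $\mc{S}\ell(\lambda)$, which is of order $-1$ on $\pO$; and near the glancing set strict convexity of $\Omega$ makes the tangential interaction harmless, which is precisely where the convexity hypothesis enters. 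A billiard bookkeeping then converts elapsed time into reflections: a ray reflected off $\pO$ from the \emph{outside} immediately passes into the open half space away from $\Omega$ and never meets $\pO$ again, a ray transmitted through $\Omega$ exits and stays outside $\Omega$ thereafter, so apart from an initial stretch of length at most $C_{R_1}$ any ray still in $B(0,R_1)$ at time $t$ must be executing interior billiard reflections, whose chords have length at most $D_\Omega=\diam\Omega$, and hence has reflected at least $(t-C_{R_1})/D_\Omega$ times and thereby gained at least that many derivatives. Since the reflected and transmitted parametrix pieces solve the equation near $\pO$ they automatically satisfy the transmission relation, so one obtains $\chi U(t)\chi:\mc{D}^s\to\mc{D}^{s+N+1}$ for $t\ge(N+1)D_\Omega+C_{R_1}$, uniformly over $\chi$ with $\supp\chi\Subset B(0,R_1)$.

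With this in hand, $\bar T=\inf_{N>0}T_{R_1,N}/N\le D_\Omega<\infty$, so Theorem~\ref{thm:main} gives, for each $\delta>0$, some $\lambda_0$ with $R_\chi(\lambda)$ continuing from $\Im\lambda>0$ to a region of the form $\{|\Re\lambda|>\lambda_0,\ \Im\lambda>-(\bar T^{-1}-\delta)\log|\Re\lambda|\}$ and with $\|R_\chi(\lambda)\|_{L^2\to L^2}=\|R_\chi(\lambda)\|_{\mc{D}^0}\le C|\lambda|^{-1}e^{B|\Im\lambda|}$ there; since $\bar T^{-1}\ge D_\Omega^{-1}$, this region contains $\{|\Re\lambda|>\lambda_0,\ \Im\lambda>-(D_\Omega^{-1}-\delta)\log|\Re\lambda|\}$. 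The passage from $\chi$ with $\supp\chi\Subset B(0,R_1)$ to arbitrary $\chi\in\Cc(\re^d)$ is routine, gluing in the free resolvent, which is entire with the stated bounds away from $\Omega$.

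I expect the main obstacle to be the propagation statement of the second paragraph: controlling the wave near the glancing set of the strictly convex barrier, and verifying that transmissions cost no regularity while each reflection costs exactly one derivative. In this paper that analysis is imported wholesale from \cite[Chapter~6]{Galk}, so the genuinely new content is only the reduction to Theorem~\ref{thm:main} together with the billiard count yielding $\bar T\le D_\Omega$. (This recovers the resonance free region of \cite[Theorem~5.2]{Galk} but not its optimality, which would additionally need the matching lower bound $\bar T\ge D_\Omega$ and Theorem~\ref{thm:mainRes}.)
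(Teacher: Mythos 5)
Your proposal matches the paper's intended argument: the paper explicitly declines to give details, stating that the theorem follows by combining the smoothing analysis of \cite[Chapter 6]{Galk} (one derivative gained per reflection off the strictly convex interface, chords of length at most $D_\Omega$, hence $\bar T\le D_\Omega$) with Theorem \ref{thm:main}, which is exactly your reduction. Your verification of the black box axioms, the billiard bookkeeping, and the observation that the $j=0$ case of Theorem \ref{thm:main} yields the stated $L^2\to L^2$ bound are all consistent with that route.
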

This result is also generically optimal by \cite[Theorem 6.1]{Galk} which we repeat here for the convenience of the reader. For a generic set of $\Omega$ and $V$, letting $\Lambda$ denote the set of poles of $R_\chi(\lambda)$, we have the following theorem.
\begin{theorem}
Let
$$N(r,\rho):=\#\left\{\lambda\in \Lambda\,\mid\, |\Re \lambda|\leq r,\,\Im \lambda \geq -\rho\log |\Re \lambda|\right\}.$$
Then for all $\e,\delta>0$ there exists $R_0>0$ and $C>0$ so that for $R>R_0$ 
$$N\left(r,\frac{1}{D_{\Omega}}+\delta\right)>cr^{1-\e}.$$

In particular, there exists a sequence $\{\lambda_n\}_{n=1}^\infty\subset \Lambda$ so that 
$$\Re \lambda_n\to \infty,\quad \quad \frac{-\Im \lambda_n}{\log |\Re \lambda_n|}\to \frac{1}{D_{\Omega}}.$$
\end{theorem}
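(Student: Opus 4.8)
This is \cite[Theorem~6.1]{Galk}; I sketch how it also follows by combining Theorem~\ref{thm:mainRes} with the wave-trace analysis of \cite[Chapter~6]{Galk}. The plan is to feed Theorem~\ref{thm:mainRes} with singularities of the regularized wave trace $u(t)$ of $P=-\Delta+\delta_{\pO}\otimes V$ occurring at times tending to infinity. Concretely, I would produce a sequence $T_k\to\infty$ and exponents $N_k>0$ such that for every $\varphi\in\Cc(\re)$ supported near $T_k$ with $\varphi(T_k)=1$ one has $|\widehat{\varphi u}(\tau)|\ge C_k|\tau|^{-N_k}$ for $|\tau|\ge1$, and such that $\liminf_k T_k/N_k=D_{\Omega}$; then $\bar T=D_{\Omega}$ in Theorem~\ref{thm:mainRes} and the first display follows. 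One first checks that the structural hypothesis \eqref{eqn:numResAssume} holds: the reference operator $P^\sharp$ is a compactly supported conormal perturbation of $-\Delta$ on $\mathbb{T}^d_{R_1}$, so a standard Weyl estimate gives $\#(\Spec(P^\sharp)\cap[-r^2,r^2])=O(r^d)$.

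The times $T_k$ are dictated by the classical dynamics of $P$. Microlocally, whenever the wave propagator of $P$ pushes a singularity across $\pO$, the singularity splits into a transmitted part of full strength and a reflected part that is smoother by one derivative, since the reflection coefficient of $\delta_{\pO}\otimes V$ is $O(|\lambda|^{-1})$ relative to the incident amplitude. Hence the closed trajectories feeding singularities into $u$ are the chords of $\Omega$ run back and forth; as $\Omega$ is strictly convex, the longest such chord is the diameter, realized by a pair $p,q\in\pO$ with $pq$ normal to $\pO$ at both endpoints, so $p\leftrightarrow q$ is a genuine two-bounce reflected periodic ray of period $2D_{\Omega}$. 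Its $n$-th iterate contributes to $u$ near $T_n\approx 2nD_{\Omega}$ a term whose Fourier transform decays no faster than $|\tau|^{-2n}$, the reflected amplitude having accumulated $2n$ factors of $|\lambda|^{-1}$; thus $T_n/N_n\to D_{\Omega}$. For a generic $\Omega$ (the diameter realized at an isolated pair of points with the two-bounce orbit non-degenerate) and generic $V$ (it is enough that $V(p)V(q)\neq0$), the stationary-phase evaluation of this term has nonvanishing leading coefficient, so the lower bound $|\widehat{\varphi u}(\tau)|\ge C_n|\tau|^{-2n}$ genuinely holds. This parametrix construction and stationary-phase computation are precisely \cite[Chapter~6]{Galk}. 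Plugging $\bar T=D_{\Omega}$ into Theorem~\ref{thm:mainRes} gives $N(r,D_{\Omega}^{-1}+\delta)>cr^{1-\e}$.

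For the ``in particular'' I would combine this with the resonance free region established just above: for each $\delta>0$ there are no poles of $R_\chi(\lambda)$ in $\{\,|\Re\lambda|>\lambda_0,\ \Im\lambda>-(D_{\Omega}^{-1}-\delta)\log|\Re\lambda|\,\}$. Only finitely many of the poles counted by $N(r,D_{\Omega}^{-1}+\delta)$ have $|\Re\lambda|$ bounded (a bounded region contains finitely many poles), so at least $cr^{1-\e}-C$ poles with $|\Re\lambda|\le r$ lie in the strip $-(D_{\Omega}^{-1}+\delta)\log|\Re\lambda|\le\Im\lambda\le -(D_{\Omega}^{-1}-\delta)\log|\Re\lambda|$, i.e. with $-\Im\lambda/\log|\Re\lambda|$ within $\delta$ of $D_{\Omega}^{-1}$. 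Letting $r\to\infty$ produces infinitely many such poles with $\Re\lambda\to\infty$, and a diagonal argument over a sequence $\delta_m\downarrow0$ extracts $\{\lambda_n\}$ with $\Re\lambda_n\to\infty$ and $-\Im\lambda_n/\log|\Re\lambda_n|\to D_{\Omega}^{-1}$.

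The main obstacle is the middle step, imported from \cite[Chapter~6]{Galk}: constructing a parametrix for the delta-potential wave propagator fine enough to isolate the contribution of the $n$-times iterated diameter orbit to $u(t)$, evaluating the resulting oscillatory integral by stationary phase, and identifying the open dense conditions on $\Omega$ and $V$ under which the leading coefficient is nonzero and the smoothing rate is exactly $2n$. Everything else is bookkeeping with Theorem~\ref{thm:mainRes} and the resonance free region.
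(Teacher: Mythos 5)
The paper offers no proof of this statement---it is quoted verbatim from \cite[Theorem 6.1]{Galk} (``which we repeat here for the convenience of the reader'')---so there is nothing to compare step by step. Your sketch of how it would follow from Theorem \ref{thm:mainRes} together with the wave-trace analysis of \cite[Chapter 6]{Galk} (iterates of the diameter bouncing-ball orbit, one factor of $|\lambda|^{-1}$ lost per reflection, hence $T_n/N_n\to D_{\Omega}$, plus the standard band-trapping/diagonal argument for the ``in particular'') is consistent with that reference and mirrors exactly how the paper derives the analogous conic-point result, Theorem \ref{thm:resConic}, from \cite{FoWu} and Theorem \ref{thm:mainRes}; so this is essentially the same approach.
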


\subsection{Organization of the paper}
The paper is organized as follows. In Sections \ref{sec:resFree}, \ref{sec:smoothing}, and \ref{sec:existence} we prove respectively Theorems \ref{thm:main}, \ref{thm:mainConverse}, and \ref{thm:mainRes}. We then give the applications to scattering on manifolds with conic points in Section \ref{sec:conic}.

\noindent
{\sc Acknowledgemnts.} The author would like to thank Andras Vasy and Maciej Zworski for encouragement and valuable suggestions. Thanks also to Jared Wunsch and Luc Hillairet for their comments and reading of an earlier version as well as discussion about Theorem \ref{thm:resConic} and to the anonymous referees for their careful reading and many helpful comments. The author is grateful to the National Science Foundation for support under the Mathematical Sciences Postdoctoral Research Fellowship  DMS-1502661.

\section{Proof of Theorem \ref{thm:main}}
\label{sec:resFree}
All of the essential elements are contained in \cite[Proposition 8]{BaWu} \cite[Section 4.6]{ZwScat}  \cite{Vain}. In order to prove Theorem \ref{thm:main}, one need only keep track of various constants in those arguments.

Let 
$$(\mc{F}_{t\to \lambda}f)(\lambda):=\int e^{-it\lambda}f(t)dt$$
denote the Fourier transform mapping $\mc{S}'\to\mc{S}'$. Then the inverse Fourier transform is given by 
$$(\mc{F}_{t\to \lambda}^{-1}g)(t):=\frac{1}{2\pi}\int e^{it\lambda}g(\lambda)d\lambda.$$

We will use \cite[Lemma 6]{BaWu}, repeated here for the convenience of the reader,
\begin{lemma}
\label{lem:FTest}
Suppose that $H_1$ and $H_2$ are Hilbert spaces and $N(t):H_1\to H_2$ is a family of bounded operators having $k$ continuous derivatives in $t$ when $t\in \re$ and being analytic in $t$ for $\Re t>T>0$, and equal to 0 on $t<0$. Suppose that there are constants $j_0$, $k\geq j_0+2$ and $C_j$ so that for $0\leq j\leq k$,
\begin{equation}
\label{eqn:derEst}\left\|\partial_t^jN(t)\right\|\leq C_j|t|^{j_0-j},\quad \quad\Re t>T.
\end{equation}
Then the operator
$$\check N(\lambda)=\mc{F}_{t\to \lambda}^{-1}N(t):H_1\to H_2,\quad\quad \Im \lambda>0$$
continues analytically to the domain $-\frac{3\pi}{2}<\arg\lambda <\frac{\pi}{2}$ and when $|\lambda|>1$ has
$$\|\check N\|\leq C_j|\lambda|^{-j}e^{T|\Im \lambda|},\quad \quad j=0,\dots k$$
\end{lemma}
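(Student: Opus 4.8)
The plan is the classical Paley--Wiener-type argument: integrate by parts to extract powers of $\lambda^{-1}$ from $\check N$, and then deform the $t$-contour into the half-plane $\{\Re t>T\}$ where $N$ is holomorphic, both to continue $\check N$ and to estimate it. Throughout I read $\check N(\lambda)=\frac{1}{2\pi}\int_0^\infty e^{it\lambda}N(t)\,dt$, the integral running over $t\geq 0$ since $N\equiv 0$ for $t<0$, with the operator-valued integrals taken in the strong sense. For $\Im\lambda>0$ the bound $\|N(t)\|\leq C_0|t|^{j_0}$ on $\{\Re t>T\}$ together with continuity of $N$ on $[0,T]$ gives absolute convergence, and differentiation under the integral sign shows $\lambda\mapsto\check N(\lambda)$ is holomorphic on $\{\Im\lambda>0\}$.

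First I would integrate by parts. Since $N\in C^k(\re)$ and $N$ vanishes on $(-\infty,0)$, taking limits from the left gives $\partial_t^\ell N(0)=0$ for $0\leq\ell\leq k$, and each $\partial_t^\ell N$ still obeys a polynomial bound on $\{\Re t>T\}$, so $e^{it\lambda}\partial_t^\ell N(t)\to 0$ as $t\to+\infty$ when $\Im\lambda>0$. Writing $e^{it\lambda}=(i\lambda)^{-1}\partial_t(e^{it\lambda})$ and integrating by parts $k$ times --- every boundary term vanishing --- yields, for $\Im\lambda>0$,
$$\check N(\lambda)=\frac{(-1)^k}{2\pi(i\lambda)^k}\int_0^\infty e^{it\lambda}\,\partial_t^kN(t)\,dt.$$
This isolates the factor $|\lambda|^{-k}$; it remains to continue the last integral and to bound it by $Ce^{T|\Im\lambda|}$. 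I integrate by parts the maximal number of times available because this is where the hypothesis $k\geq j_0+2$ enters: it makes $\|\partial_t^kN(t)\|\leq C_k|t|^{j_0-k}=O(|t|^{-2})$, which is integrable and is what causes the arcs below to vanish.

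Next I would rotate the contour. Split $\int_0^\infty=\int_0^T+\int_T^\infty$. The first piece is entire in $\lambda$ with $\|\int_0^Te^{it\lambda}\partial_t^kN\,dt\|\leq e^{T|\Im\lambda|}\int_0^T\|\partial_t^kN(t)\|\,dt$. For the tail, $\partial_t^kN$ is holomorphic on $\{\Re t>T\}$, so for any fixed $\theta\in(-\frac{\pi}{2},\frac{\pi}{2})$ Cauchy's theorem lets us push $[T,\infty)$ onto the ray $\Gamma_\theta=\{T+se^{i\theta}:s\geq0\}\subset\{\Re t>T\}$; the connecting arc at radius $R$ is $O(R\cdot R^{j_0-k})=O(R^{-1})\to0$ since $|e^{i(T+Re^{i\phi})\lambda}|=e^{-T\Im\lambda}e^{-R|\lambda|\sin(\phi+\arg\lambda)}$ stays bounded for the relevant $\lambda$. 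One obtains
$$\int_T^\infty e^{it\lambda}\partial_t^kN(t)\,dt=e^{i\theta}\int_0^\infty e^{i(T+se^{i\theta})\lambda}\,\partial_t^kN(T+se^{i\theta})\,ds,$$
whose right-hand side converges absolutely --- using $|T+se^{i\theta}|\geq\max(T,s)\geq\frac12(T+s)$ and $j_0-k\leq-2$, so $\|\partial_t^kN(T+se^{i\theta})\|\leq C'_k(T+s)^{-2}$ --- and is holomorphic on the half-plane $\{\Im(e^{i\theta}\lambda)>0\}$. Letting $\theta$ range over $(-\frac{\pi}{2},\frac{\pi}{2})$ and checking, by a further use of Cauchy's theorem, that the resulting functions agree on overlaps, I obtain the analytic continuation of $\check N$ to the union of these half-planes, a sector of opening $2\pi$, namely the region $-\frac{3\pi}{2}<\arg\lambda<\frac{\pi}{2}$ of the statement. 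Finally, for $|\lambda|>1$, dropping the favourable factor $e^{-s|\lambda|\sin(\theta+\arg\lambda)}\leq1$ and using $|e^{iT\lambda}|\leq e^{T|\Im\lambda|}$ and $\int_0^\infty(T+s)^{-2}\,ds<\infty$,
$$\left\|e^{i\theta}\int_0^\infty e^{i(T+se^{i\theta})\lambda}\partial_t^kN(T+se^{i\theta})\,ds\right\|\leq C\,e^{T|\Im\lambda|};$$
combined with the $[0,T]$ piece and $|(i\lambda)^{-k}|=|\lambda|^{-k}$ this gives $\|\check N(\lambda)\|\leq C_k|\lambda|^{-k}e^{T|\Im\lambda|}$, and since $|\lambda|^{-k}\leq|\lambda|^{-j}$ for $|\lambda|>1$ the stated bound for every $0\leq j\leq k$ follows.

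The main obstacle is the contour rotation: the deformation must remain inside $\{\Re t>T\}$, where holomorphy of $N$ is available, which caps $|\theta|<\frac{\pi}{2}$ and is exactly why the continuation has opening $2\pi$ and no more; and one must verify that the connecting arcs genuinely vanish, which forces a compatibility between $\theta$ and $\arg\lambda$ and is where $k\geq j_0+2$ is used. A secondary point is that one cannot integrate by parts only $j<j_0+2$ times and then rotate, since the arcs would not vanish, so the continuation is obtained once and for all at order $k$, the lower-$j$ estimates then being automatic on $\{|\lambda|>1\}$.
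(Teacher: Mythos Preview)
Your proof is correct and follows essentially the same approach as the paper: integrate by parts to extract the factor $\lambda^{-j}$, then push the $t$-contour from $[T,\infty)$ into the half-plane $\{\Re t>T\}$ where $N$ is holomorphic, using the decay $\|\partial_t^kN\|=O(|t|^{-2})$ (from $k\geq j_0+2$) to kill the arcs. The only cosmetic difference is that the paper deforms onto the two fixed vertical rays $\Gamma_\pm=[0,T]\cup[T,T\pm i\infty)$ (one for $\Re\lambda>0$, one for $\Re\lambda<0$) rather than your continuous family of tilted rays $\{T+se^{i\theta}\}_{|\theta|<\pi/2}$, and it writes the integration-by-parts identity for each $j$ separately rather than doing it once at order $k$ and then invoking $|\lambda|^{-k}\leq|\lambda|^{-j}$ for $|\lambda|>1$; the underlying argument is identical.
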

\begin{proof}
Since $\check N$ grows at most polynomially in $t$ and is supported int $\{t\geq 0\}$, it depends analytically on $\Im \lambda\geq 0$. Moreover, for $0\leq j\leq k$,
$$\check{N}=(-i\lambda)^{-j}\int_0^\infty e^{i\lambda t}\partial_t^jN(t)dt.$$
Then, define the contours $\Gamma_{\pm}$ in the $t$- plane consisting of $[0,T]\cup [T,T\pm i\infty).$ Then when $j\geq j_0+2$, 
\begin{align}
\label{eqn:RePos}\check{N}&=(-i\lambda)^{-j}\int_{\Gamma_+}e^{i\lambda t}\partial_t^jN(t)dt&0<\arg \lambda <\frac{\pi}{2}\\
\label{eqn:ReNeg}
\check{N}&=(-i\lambda)^{-j}\int_{\Gamma_-}e^{i\lambda t}\partial_t^jN(t)dt&\frac{\pi}{2}<\arg \lambda <\pi
\end{align}
The equation \eqref{eqn:RePos} can be continued to $\Re \lambda \geq 0$ and \eqref{eqn:ReNeg} to $\Re \lambda<0$. The estimates follow easily from these equations.
\end{proof}

We now prove Theorem \ref{thm:main}
\begin{proof}
Fix $\chi \in \Cc(\re^d)$ with $\chi\equiv 1$ on $B(0,R_0)$, $\supp \chi \Subset B(0,R_1)$, $N>0$. Then by assumption, there exists $T_{R_1,N}$ so that for all $s\geq 0$
$$\chi U(t)\chi:\mc{D}^s\to \mc{D}^{s+N+1},\quad \quad t\geq T_{R_1,N}.$$

Let $\chi_i\in \Cc(B(0,R_1))$, $i=0,\dots 3$ with $\chi_1=\chi$ and $\chi_i\chi_{i+1}=\chi_{i+1}$. Next, define the cutoff $\rho(t,x)$ so that $\rho|_{B(0,R_0)}=\rho(t)$, $0\leq \rho\leq 1$, and 
$$\rho(t,x)=\begin{cases}1&t\leq |x|+T_{R_1,N}\\
0&t\geq |x|+T_{R_1,N}+R_0+1.
\end{cases}$$
Then, since 
$$\chi_0 U(t)\chi :\mc{D}^s\to \mc{D}^{s+N+1}$$
for $t\geq T_{R_1,N}$ and any singularities in $(1-\chi_0)U(t)\chi$ not having the same smoothing property must be outgoing, the propagation of singularities theorem for the wave equation on $\re^d$ implies that
$$(1-\rho)U(t)\chi:\mc{D}^s\to \mc{D}^{s+N},\quad\quad t\geq 0.$$ 
For $g\in L^2,$ consider $\rho U(t)\chi g$. Then, letting $\Box:=\partial_t^2+P$, and $\Box_0:=\partial_t^2-\Delta$,
$$\begin{cases}\Box\rho U(t)\chi g=[\Box,\rho]U(t)\chi g,\\
\rho U(0)\chi g=0,\quad\quad D_t\rho U(0)\chi g=\chi g.
\end{cases}
$$
\begin{figure}
\includegraphics[width=.9\textwidth]{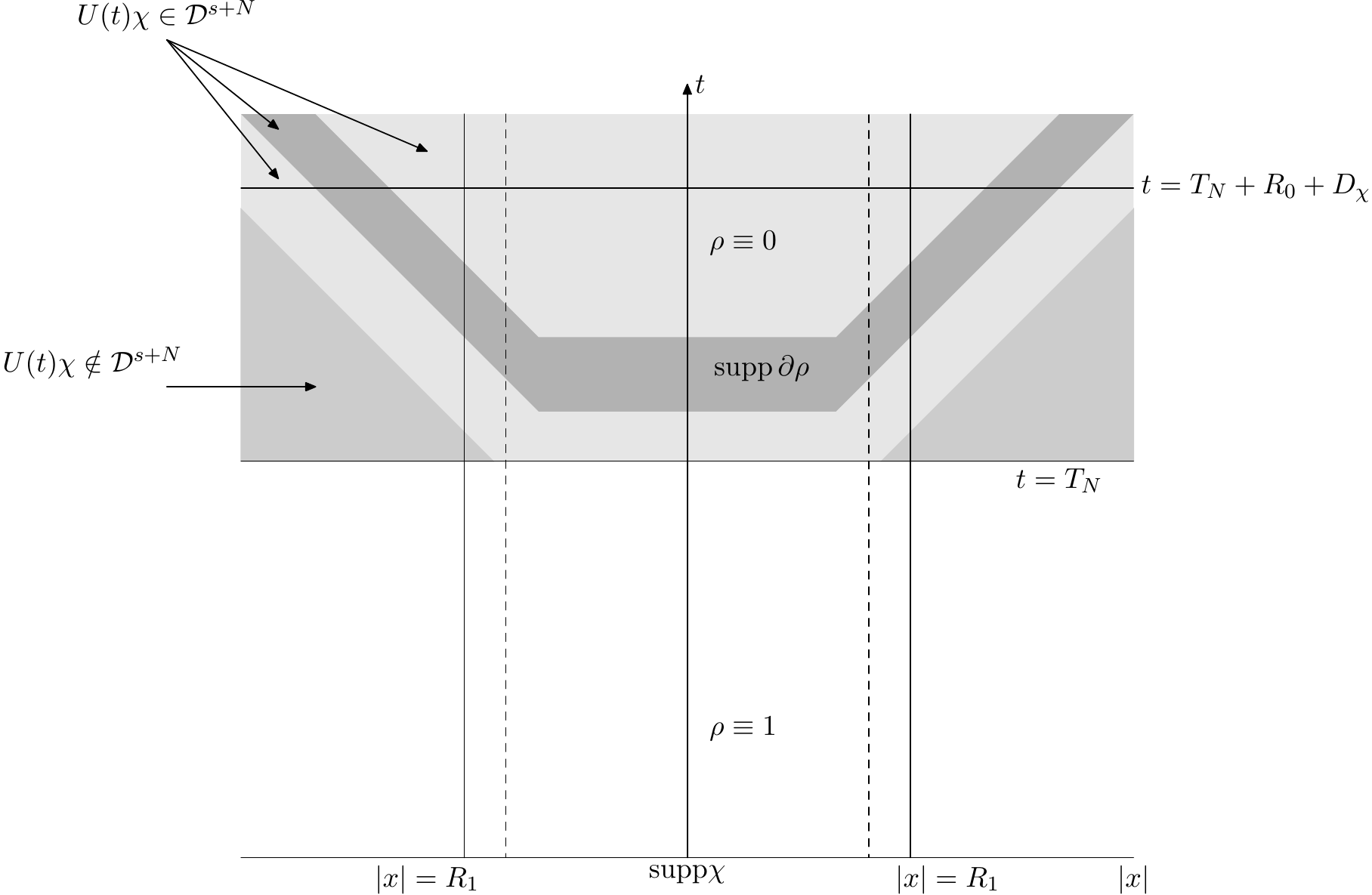}
\caption{We show support of $\chi$ and $\rho$ where $\supp\chi$ is given by the dashed box. We also show the propagation of $\mc{D}^{s+N}$ singularities for $t\geq T_N$.  }
\end{figure}
Then define 
$$F(t)g:=[\Box,\rho]U(t)\chi g=-\Box (1-\rho)U(t)\chi g.$$
 Notice that $F$ vanishes identically on $t<T_{R_1,N}$ and by our assumption
$$F(t)g\in C^0(\re_t;\mc{D}^{N-1})\cap C^{N-1}(\re_t;\mc{D}^0).$$
Moreover, $F(t)g$ has compact support in $t$ for any fixed $x$. In particular, 
$$\supp F(t)g\subset\{|x|+T_{R_1,N}\leq t\leq |x|+T_{R_1,N}+R_0+1\}.$$

Now, define an approximate resolvent 
$$R_1(\lambda)=\mc{F}_{t\to \lambda}^{-1}(\rho H(t)U(t)\chi),$$
where $H(t)$ denotes the Heavisde function. Then 
$$\partial_t^2[H(t)U(t)\chi g]=H(t)\partial_t^2U(t)\chi g+\delta(t)\chi g$$
and hence 
\begin{align*} 
(P-\lambda^2)R_1(\lambda)g&=\mc{F}^{-1}(\Box \rho H(t)U(t)\chi g)\\
&=\mc{F}^{-1}(H(t)\Box U(t)\chi g-\Box (1-\rho)H(t)U(t))+\chi g\\
&=\mc{F}^{-1}(F(t)g)(\lambda)+\chi g
\end{align*}
where we have used that $\supp F(t)\subset \{t\geq 0\}.$

Next, write 
$$Fg=\chi_2 F(t)g+(1-\chi_2)F(t)g=:F_1g+F_2g.$$
$R_1(\lambda)$ is not yet accurate enough to complete the proof. We must add a piece living only on $\re^d$. For this, let $W(t)$ denote the solution to 
$$\Box_0W(t)g=-F_2g,\quad\quad W(0)g=D_tW(0)g=0.$$
Observe that 
\begin{align*} 
-F_2g=\Box_0\chi_3W(t)g+\Box_0(1-\chi_3)W(t)g=-[\Delta,\chi_3]W(t)g+\Box_0(1-\chi_3)W(t)g
\end{align*}
Then, define a better approximation of the resolvent as 
$$R_2(\lambda)=R_1(\lambda)+\mc{F}_{t\to \lambda}^{-1}((1-\chi_3)W(t)).$$ 
Then since $P=-\Delta$ on $\supp (1-\chi_3)$, 
\begin{align*}
(P-\lambda^2)R_2(\lambda)g&=\chi g+\mc{F}^{-1}(F(t)g+\Box_0(1-\chi_3)W(t)g)\\
&=\chi\left(I+\mc{F}^{-1}(\chi_2F(t)+[\Delta,\chi_3]W(t))\right)g
\end{align*}
So, 
\begin{equation}\label{eqn:Resolve}R_2(\lambda)=R(\lambda)\chi\left(I+\mc{F}^{-1}(\chi_2F(t)+[\Delta,\chi_3]W(t))\right).
\end{equation}
We have the following estimates.
\begin{lemma}
\label{lem:ests}
Let ${T'_{R_1,N}}=T_{R_1,N}+R_0+2R_1+3$, then
\begin{align}
\left\|\chi R_1(\lambda)\right\|_{L^2\to \mc{D}^j}&\leq C_{j,1}|\lambda|^{j-1}e^{{T'_{R_1,N}}|\Im \lambda|},&j=0,1\label{eqn:est1}\\
\left\|\mc{F}^{-1}\chi F\right\|_{L^2\to L^2}&\leq C_{j,2}|\lambda|^{-j}e^{{T'_{R_1,N}}|\Im \lambda|},&j=0,1,\dots,N-1\label{eqn:est2}\\
\left\|\mc{F}^{-1}\chi(1-\chi_3)W\right\|_{L^2\to \mc{D}^j}&\leq C_{j,3}|\lambda|^{j-1}e^{{T'_{R_1,N}}|\Im \lambda|},&j=0,1\label{eqn:est3}\\
\left\|\mc{F}^{-1}[\Delta,\chi_3]W\right\|_{L^2\to L^2}&\leq C_{j,4}|\lambda|^{-j}e^{{T'_{R_1,N}}|\Im \lambda|},&j=0,1,\dots,N-1\label{eqn:est4}
\end{align}
\end{lemma}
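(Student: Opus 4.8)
The plan is to read off all four estimates from Lemma~\ref{lem:FTest}, the only real content being to keep track of the $t$-support (this is the origin of $T'_{R_1,N}=T_{R_1,N}+R_0+2R_1+3$, via finite speed of propagation on $\re^d$) and to pay for the derivative losses in the commutators $[\Box,\rho]$ and $[\Delta,\chi_3]$ either by the $N+1$ derivatives furnished by \eqref{eqn:smoothing} or by the single derivative that $U(t)=\sin(t\sqrt P)/\sqrt P$ gains over its $t$-derivatives. Apart from this bookkeeping the argument is that of \cite{Vain,BaWu,ZwScat}.

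For \eqref{eqn:est1} and \eqref{eqn:est2} the relevant families are compactly supported in $t$. Since $\supp\chi\Subset B(0,R_1)$ and $\rho(t,x)=0$ for $t\ge|x|+T_{R_1,N}+R_0+1$, the family $\chi\rho H(t)U(t)\chi$ vanishes for $t\notin[0,T'_{R_1,N}]$; by the energy inequality for $\Box=\partial_t^2+P$ it is bounded $\mc{D}^0\to\mc{D}^1$ and $C^1$ into $\mc{D}^0\to\mc{D}^0$ on this interval, uniformly in $t$ (the finitely many negative eigenvalues of $P$ contributing only a factor $\cosh(Ct)$). Taking $\mc{F}^{-1}_{t\to\lambda}$ directly gives the $j=1$ bound in \eqref{eqn:est1}, while one integration by parts in $t$ — whose boundary term vanishes at $t=0$ because $U(0)=0$ and at $t=T'_{R_1,N}$ because $\rho$ does — produces the extra $|\lambda|^{-1}$ for the $j=0$ bound. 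Likewise $\chi F(t)g=\chi[\Box,\rho]U(t)\chi g$ is supported in $t\in[T_{R_1,N},T'_{R_1,N}]$, and since $\nabla_{t,x}\rho$ is supported where, by \eqref{eqn:smoothing}, $\chi_0U(t)\chi$ gains $N+1$ derivatives, $F(t)g\in C^{N-1}(\re_t;\mc{D}^0)$ with bounded $t$-derivatives; as all $t$-derivatives of $\chi F$ up to order $N-1$ vanish at the left endpoint, Lemma~\ref{lem:FTest} (equivalently, $j$-fold integration by parts, $j=0,\dots,N-1$) yields \eqref{eqn:est2}.

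The estimates \eqref{eqn:est3}, \eqref{eqn:est4} are different because $W(t)$ is not compactly supported in $t$: the ``outgoing'' source $F_2=(1-\chi_2)F$ lives in an annulus $\{\,t-T_{R_1,N}-R_0-1\le|x|\le t-T_{R_1,N}\,\}$ that recedes to infinity. Instead one uses that $W$ solves $\Box_0 W=-F_2$ with zero Cauchy data, so that $\mc{F}^{-1}_{t\to\lambda}(W)(\lambda)=-R_0(\lambda)\,\mc{F}^{-1}_{t\to\lambda}(F_2)(\lambda)$ with $R_0(\lambda)=(-\Delta-\lambda^2)^{-1}$ the free resolvent on $\re^d$. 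Hence $\mc{F}^{-1}(\chi(1-\chi_3)W)=-\chi(1-\chi_3)R_0(\lambda)\mc{F}^{-1}(F_2)(\lambda)$ and $\mc{F}^{-1}(\chi[\Delta,\chi_3]W)=-\chi[\Delta,\chi_3]R_0(\lambda)\mc{F}^{-1}(F_2)(\lambda)$; one combines the classical continuation and kernel bounds for the free resolvent (gaining $|\lambda|^{j-1}$ into $\mc{D}^j$) with the fact that for each fixed $x$ the function $F(t,x)g$ is supported in $t\in[|x|+T_{R_1,N},|x|+T_{R_1,N}+R_0+1]$, so that the resulting oscillatory integral has a phase extremized along the incoming null ray through the origin; tracking the radii of $\supp(1-\chi_3)\cap B(0,R_1)$, $\supp\nabla\chi_3$ and $\supp F_2$ then shows the exponential factor is no worse than $e^{T'_{R_1,N}|\Im\lambda|}$.

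The main obstacle is exactly this last step: for the $W$-terms one cannot simply invoke compact support in $t$, and must instead run Vainberg's oscillatory-integral analysis for the free resolvent acting on an outgoing source, being careful that the only exponential growth incurred is governed by $T'_{R_1,N}$. This — together with the (routine) verification that the supports in \eqref{eqn:est1}, \eqref{eqn:est2} fit inside $[0,T'_{R_1,N}]$ — is precisely the ``keeping track of the constants'' alluded to in \cite{Vain,BaWu,ZwScat}.
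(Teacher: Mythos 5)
Your treatment of \eqref{eqn:est1} and \eqref{eqn:est2} matches the paper: compact $t$-support of $\chi\rho U(t)\chi$ and of $\chi F$ inside $[0,T'_{R_1,N}]$, the energy estimate for $j=1$, and integration by parts (once, resp.\ up to $N-1$ times, using the $C^{N-1}$ regularity of $F$ coming from \eqref{eqn:smoothing}) — that part is fine.

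For \eqref{eqn:est3} and \eqref{eqn:est4}, however, there is a genuine gap. You propose to write $\mc{F}^{-1}(W)=-R_0(\lambda)\mc{F}^{-1}(F_2)(\lambda)$ and then invoke continuation bounds for the free resolvent. But $F_2=(1-\chi_2)F$ is \emph{not} compactly supported in spacetime: for each $x$ it lives in $t\in[|x|+T_{R_1,N},\,|x|+T_{R_1,N}+R_0+1]$ with $|x|$ ranging to infinity, so for $\Im\lambda<0$ the function $x\mapsto\mc{F}^{-1}(F_2)(\lambda)(x)$ grows like $e^{|\Im\lambda|\,|x|}$ and does not lie in $L^2_{\comp}$ (nor in any space on which the continued $R_0(\lambda)$ acts). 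The composition you write down is therefore not defined, and the cancellation between this outgoing exponential growth of the source and the outgoing structure of the free resolvent kernel is precisely the content of the lemma — which you flag as ``the main obstacle'' but then leave as an assertion (``run Vainberg's oscillatory-integral analysis \dots being careful''). The paper resolves this concretely by a different decomposition: $W(t)=(1-\chi_2)\rho H(t)U(t)\chi g-H(t)U_0(t)(1-\chi_2)\chi g+q$, where $\Box_0 q=-[\Delta,\chi_2]\rho H(t)U(t)\chi g$ has forcing compactly supported in \emph{spacetime} (contained in $\re\times\supp\chi$ and $0\le t\le T_{R_1,N}+R_1+R_0+1$). The first term is killed by $[\Delta,\chi_3]$ and handled by \eqref{eqn:est1} for \eqref{eqn:est3}; for the other two terms one uses strong Huygens when $d$ is odd (so $\chi q$ is compactly supported in $t$) and, when $d$ is even, the explicit form of $E_+$ to show $\chi q$ and $\chi U_0(t)(1-\chi_2)\chi g$ are analytic in $t$ for $t>T'_{R_1,N}-1$ with polynomially decaying derivatives, at which point Lemma \ref{lem:FTest} gives the stated bounds. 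Some such decomposition (or an equivalent explicit contour/kernel computation) is needed; as written, your proof of \eqref{eqn:est3}--\eqref{eqn:est4} does not go through.
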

\begin{remark}
We will also justify taking the Fourier transform in the process of proving Lemma \ref{lem:ests}.
\end{remark}
\begin{proof}
To obtain estimates \eqref{eqn:est1} and \eqref{eqn:est2}, we simply write the inverse Fourier transform,
$$\chi R_1(\lambda)=\frac{1}{2\pi}\int_0^{\infty} \chi \rho U(t)\chi e^{it\lambda}dt.$$
Then for $t>{T'_{R_1,N}}$, $\rho\chi=0$ and hence \eqref{eqn:est1} with $j=1$ follows from the energy estimate for the wave equation and with $j=0$ follows after an integration by parts in $t$. The estimate \eqref{eqn:est2} follows after observing that 
$$\supp\chi F\subset\{T_{R_1,N}\leq t\leq {T'_{R_1,N}}\}$$
and that we can integrate by parts up to $N-1$ times.

For the estimates \eqref{eqn:est3} and \eqref{eqn:est4}, we write
\begin{equation}
\label{eqn:W}W(t)=(1-\chi_2)\rho H(t)U(t)\chi g-H(t)U_0(t)(1-\chi_2)\chi g+q(t,z)
\end{equation}
where $U_0(t)=\sin t\sqrt{-\Delta}/\sqrt{-\Delta}$ is the free wave propagator. Then, 
\begin{align*}
\Box_0q&=\Box_0W-(1-\chi_2)\Box(\rho H(t)U(t)\chi g)+[\Box_0,\chi_2]\rho H(t)U(t)\chi g+\Box_0H(t)U_0(t)(1-\chi_2)\chi g\\
&=-F_2g-[\Delta,\chi_2]\rho H(t)U(t)\chi g-(1-\chi_2)(\Box(\rho U(t)\chi g)+\delta(t)\chi g)+\delta(t)(1-\chi_2)\chi g\\
&=-(1-\chi_2)Fg+(1-\chi_2)F-[\Delta,\chi_2]\rho H(t) U(t)\chi g\\
&=-[\Delta,\chi_2]\rho H(t) U(t)\chi g
\end{align*}
and
$$q(0,x)=D_tq(0,x)=0$$ 
Notice that 
\begin{equation}
\label{eqn:supprhs}\supp[\Delta,\chi_2]\rho H(t)U(t)\chi \subset (\re\times \supp \chi)\cap \{0\leq t\leq T_{R_1,N}+R_1+R_0+1\}.\end{equation}

Now, let $E_+(t)$ denote a forward fundamental solution for $\Box_0$ on $\re^d$. Then 
$$\chi (x) q(t,x)=-\int_0^t\chi(x)E_+(t-s)*[\Delta,\chi_2]\rho U(s)\chi gds.$$
We first observe that \eqref{eqn:supprhs} implies that $q(t,x)\equiv 0$ on $t\leq 0$. Moreover, for $d$ odd, the strong Huygens' principle implies that $\supp \chi q\subset \{t\leq {T'_{R_1,N}}-1\}.$

If $d$ is even, then we no longer have the strong Huygens' principle. However, for $t\geq {T'_{R_1,N}}-1$, the support of the right hand side is disjoint from the singular support of $\chi E_+$. Thus
$$\chi(x)q(t,x)=c_d\chi(x)\int_0^\infty\int_{\re^d}((t-s)^2+|x-y|^2)^{\frac{1-d}{2}}([\Delta,\chi_2]\rho U(s)\chi g(y))dyds.$$
In particular, $\chi q$ is analytic for $t>{T'_{R_1,N}}-1$ and has for some $j_0$,
\begin{equation}
\label{eqn:tempEst1}\left\|\partial_t^j \chi q\right\|_{\mc{D}^1}\leq C_j t^{j_0-j}\|g\|_{\mc{H}},\quad \quad t\geq {T'_{R_1,N}}-1,\quad j\geq 0.
\end{equation}
Similarly,
$$\chi U_0(t)(1-\chi_2)\chi g=\chi( E_+*(1-\chi_2)\chi g)$$
is analytic with polynomial bounds on its first derivative for $t>{T'_{R_1,N}}-1$. In particular, it satisfies for some $j_0$,
\begin{equation}\label{eqn:tempEst2}\left\|\partial_t^j \chi U_0(t)(1-\chi_2)\chi g\right\|_{\mc{D}^1}\leq C_j t^{j_0-j}\|g\|_{\mc{H}},\quad \quad t\geq {T'_{R_1,N}}-1,\quad j\geq 0.
\end{equation}
To see \eqref{eqn:est4}, observe that $[\Delta,\chi_3](1-\chi_2)\equiv 0$ so the first term in \eqref{eqn:W} vanishes. Moreover, $W$ has $N-1$ continuous derivatives as a map into $L^2$ and we have seen (by \eqref{eqn:tempEst1} and \eqref{eqn:tempEst2}) that the surviving terms in \eqref{eqn:W} satisfy \eqref{eqn:derEst} with $T={T'_{R_1,N}}$ and any $j$. Hence, \eqref{eqn:est4} holds by Lemma \ref{lem:FTest}.

Finally, we need to obtain \eqref{eqn:est3}. For this observe that the first term in \eqref{eqn:W} has the required estimate by \eqref{eqn:est1}. Next, consider 
$$B(t)g:=(1-\chi_3)(\chi q+\chi H(t)U_0(t)(1-\chi_2)\chi g).$$ 
Let $\psi\in \Cc(\re)$ have $\psi \equiv 1$ on $\{|t|\leq {T'_{R_1,N}}-1\}$ with $\supp \chi \subset\{|t|\leq {T'_{R_1,N}}\}.$ Then the estimate for $\mc{F}^{-1}((1-\psi(t))B(t))$ follows from Lemma \ref{lem:FTest}. The estimate for $\mc{F}^{-1}(\psi(t)B(t))$ follows similar to \eqref{eqn:est1} since $\partial_t^j B(t):\mc{H}\to \mc{D}^{1-j}$, $j=0,1$ are continuous on $t>0$.
\end{proof}

Now, we use estimates \eqref{eqn:est1} to \eqref{eqn:est4} in \eqref{eqn:Resolve} to complete the proof of Theorem \ref{thm:main}. The estimates \eqref{eqn:est1} and \eqref{eqn:est3} imply 
$$\|\chi R_2(\lambda)\|_{L^2\to \mc{D}^j}\leq C_j|\lambda|^{j-1}e^{{T'_{R_1,N}}|\Im \lambda|},\quad\quad j=0,1.$$
Fix $\delta,\delta_1>0$. To prove the required estimates on $\chi R(\lambda)\chi$, we only need to show that 
$$\|\mc{F}^{-1}(\chi_2F(t)+[\Delta,\chi_3]W(t))\|_{L^2\to L^2}<1-\delta_1.$$
For this, we simply look for $\lambda $ with $|\lambda|\geq 1$ and 
$$(C_{1,N-2}+C_{2,N-2})|\lambda|^{-N+1}e^{{T'_{R_1,N}}|\Im\lambda|}<1-\delta_1.$$
That is, for $|\lambda|\geq 1$ and
$$|\Im \lambda|<\frac{(N-1)\log |\lambda|-\log (C_{1,N-2}+C_{2,N-2})+\log(1-\delta_1)}{{T'_{R_1,N}}}.$$
\begin{equation}
\label{eqn:nearlyFinal}\|\chi R(\lambda)\chi\|_{L^2\to \mc{D}^j}\leq C_j\delta_1^{-1}|\lambda|^{j-1}e^{{T'_{R_1,N}}|\Im \lambda|},\quad\quad j=0,1.\end{equation}

Now, we can take $\log \lambda_0>\delta^{-1}(\log(C_{1,N-2}+C_{2,N-2})+\log(1-\delta_1))$ to obtain that on 
$$\left\{|\Re \lambda|>\lambda_0\,|\,|\Im \lambda|<\left(\frac{N-1}{T'_{R_1,N}}-\delta\right)\log |\Re \lambda|\right\}$$
\eqref{eqn:nearlyFinal} holds. 
Taking $N$ large gives the result since either $\bar{T}=0$ in which case $(N-1)/T'_{R_1,N}\to \infty$ or $\bar{T}>0$ in which case $(N-1)/{T'_{R_1,N}}\to \bar{T}^{-1}.$ 
\end{proof}
\subsection{Subadditivity of $T_{R_1,N}$}
\label{sec:subadd}
Let 
$$\mc{U}(t)=\begin{pmatrix} \partial_t U(t)&U(t)\\-PU(t)&\partial_t U(t)\end{pmatrix}.$$
Then, $\mc{U}(t)$ forms a one parameter group of operators.

Let $\chi_2\in \Cc(\re^d)$ (as above) have $\supp \chi \subset \{\chi_{2}\equiv 1\}$ and $\supp \chi_2\Subset B(0,R_1)$. Then, 
$$\mc{U}(t)\chi=(1-\chi_2)\mc{U}(t)\chi+\chi_2\mc{U}(t)\chi.$$
So, for $t>T_{R_1,N}$,
$$\chi \mc{U}(t)\chi =\chi \mc{U}(t-T_{R_1,N})(1-\chi_2)\mc{U}(T_{R_1,N})\chi+\chi \mc{U}(t-T_{R_1,N})\chi_2 \mc{U}(T_{R_1,N})\chi.$$
By propagation of singularities for the wave equation on $\re^d$ and the fact that 
$$\WF((1-\chi_2)\mc{U}(T_{R_1,N})\chi)\subset \{(t,z,\tau,\zeta)\mid |z|>R_0,\, \la z,\zeta/\tau\ra >0\}.$$
is outgoing the first term maps into $\mc{D}^\infty\oplus \mc{D}^\infty$ for $t>T_{R_1,N}$. For the second term, we see that for $t>T_{R_1,N}+T_{R_1,M}$, and $s\in \re$,
$$\chi_2\mc{U}(t-T_{R_1,N})\chi_1:\mc{D}^s\oplus \mc{D}^{s-1}\to \mc{D}^{s+M}\oplus\mc{D}^{s+M-1}.$$
Moreover, 
$$\chi_2\mc{U}(T_{R_1,N})\chi\in mc{D}^s\oplus \mc{D}^{s-1}\to \mc{D}^{s+N}\oplus\mc{D}^{s+N-1}$$ 
by assumption. 
Thus, for $t>T_{R_1,N}+T_{R_1,M}$,
$$\chi \mc{U}(t)\chi:mc{D}^s\oplus \mc{D}^{s-1}\to \mc{D}^{s+M+N}\oplus\mc{D}^{s+M+N-1}$$
and hence 
$$\chi U(t)\chi:\mc{D}^s\to \mc{D}^{s+N+M+1},\quad\quad t>T_{R_1,N}+T_{R_1,M}$$
and $T_{R_1,N}$ is subadditive.

\section{Proof of Theorem \ref{thm:mainConverse}}
\label{sec:smoothing}
We first prove a lemma that allows us to trade powers of $\lambda$ for regularity. Let 
$R_P(\lambda):=(P-\lambda^2)^{-1}$
as in the introduction.
\begin{lemma}
\label{lem:upgrade}
Suppose that 
$$\|\chi(R_P(\lambda)-R_P(-\lambda))\chi\|_{\mc{H}\to \mc{D}^j}\leq C\la \lambda\ra^{M+j}e^{T|\Im \lambda|},\quad j=0,1.$$
Then for all $s,r\geq 0$,
$$\|\chi(R_P(\lambda)-R_P(-\lambda))\chi\|_{\mc{D}^{s}\to\mc{D}^{s+r}}\leq C\la \lambda\ra^{M+r}e^{T|\Im \lambda|}.$$ 
\end{lemma}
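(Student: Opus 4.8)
Suppose $\|\chi(R_P(\lambda)-R_P(-\lambda))\chi\|_{\mc H\to\mc D^j}\le C\la\lambda\ra^{M+j}e^{T|\Im\lambda|}$ for $j=0,1$; then $\|\chi(R_P(\lambda)-R_P(-\lambda))\chi\|_{\mc D^s\to\mc D^{s+r}}\le C\la\lambda\ra^{M+r}e^{T|\Im\lambda|}$ for all $s,r\ge0$.

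\textbf{Proof proposal.} The engine is the pair of operator identities
$$P\,D(\lambda)=\lambda^2 D(\lambda),\qquad D(\lambda)\,P=\lambda^2 D(\lambda),\qquad D(\lambda):=R_P(\lambda)-R_P(-\lambda),$$
valid a priori on $\mc{D}^\infty_{\comp}$: they follow from $(P-\lambda^2)R_P(\pm\lambda)=\Id$ on $\mc{D}$, $R_P(\pm\lambda)(P-\lambda^2)=\Id$ on $\mc{D}$, and $(-\lambda)^2=\lambda^2$. By the spectral theorem this upgrades to $(2C+P)^\theta D(\lambda)=(2C+\lambda^2)^\theta D(\lambda)=D(\lambda)(2C+P)^\theta$ for all real $\theta$. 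Since $\|u\|_{\mc{D}^k}\approx\|(2C+P)^{k/2}u\|_{\mc{H}}$, the claimed bound is equivalent to $\|(2C+P)^{(s+r)/2}\,\chi D(\lambda)\chi\,(2C+P)^{-s/2}\|_{\mc{H}\to\mc{H}}\le C\la\lambda\ra^{M+r}e^{T|\Im\lambda|}$, so it suffices to move the two functional-calculus operators past the spatial cutoffs $\chi$.

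The plan is to commute: write $(2C+P)^{(s+r)/2}\chi=\chi(2C+P)^{(s+r)/2}+[\,(2C+P)^{(s+r)/2},\chi\,]$, and similarly on the right with $(2C+P)^{-s/2}$. For the main term one pushes $\chi$ all the way through, uses $\chi(2C+P)^{\theta}D(\lambda)=(2C+\lambda^2)^{\theta}\chi D(\lambda)$, and is left with $(2C+\lambda^2)^{r/2}\chi D(\lambda)\chi$, whose $\mc{H}\to\mc{H}$ norm is $\le C\la\lambda\ra^{r}\cdot\la\lambda\ra^{M}e^{T|\Im\lambda|}$ by the $j=0$ hypothesis — exactly the claimed power. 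Equivalently, and perhaps cleaner for exposition, one can run an elliptic bootstrap over a finite chain of nested cutoffs $\chi=\chi_0\prec\chi_1\prec\cdots$ (with $\chi_{i+1}\equiv1$ near $\supp\chi_i$, all $\Subset B(0,R_1)$): from $(2C+P)(\chi_1 D(\lambda)\chi f)=(2C+\lambda^2)\chi_1 D(\lambda)\chi f+[-\Delta,\chi_1]D(\lambda)\chi f$ (the commutator with $P$ equals that with $-\Delta$ since $\supp\nabla\chi_1\subset\re^d\setminus B(0,R_0)$, using that $\chi\equiv1$ on $B(0,R_0)$), one gets $\|\chi_1 D\chi f\|_{\mc{D}^{k+2}}\lesssim\la\lambda\ra^2\|\chi_1 D\chi f\|_{\mc{D}^k}+\|\chi_2 D\chi f\|_{\mc{D}^{k+1}}$; induction on $k$ with base cases $k=0,1$ (the hypotheses) then yields $\|\chi D\chi f\|_{\mc{D}^{r}}\le C\la\lambda\ra^{M+r}e^{T|\Im\lambda|}\|f\|_{\mc{H}}$ for integer $r$, and the case of general $s$ follows by transposing and using $D(\lambda)(2C+P)^{-s/2}=(2C+\lambda^2)^{-s/2}D(\lambda)$ on the input side, with non-integer $r$ handled directly by the functional-calculus version (no interpolation needed, as $(2C+P)^\theta$ is defined for every $\theta\in\re$).

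The main obstacle — and the only real work beyond the above bookkeeping — is controlling the error terms. There are two sources. First, the commutators $[(2C+P)^{\theta},\chi]$ are not literally supported near $\supp\nabla\chi$ since $(2C+P)^\theta$ is nonlocal; one must invoke the black-box functional calculus (Helffer--Sjöstrand / the constructions in \cite[Chapter 4]{ZwScat}) to write $(2C+P)^\theta$ as a genuine pseudodifferential operator of order $2\theta$ in the region $\re^d\setminus B(0,R_0)$ where $P=-\Delta$, modulo an operator in $\mc{R}$, so that $[(2C+P)^\theta,\chi]$ gains a derivative and the residual piece is negligible in $\lambda$. Second, the hypothesis is stated for the single cutoff $\chi$ on both sides, whereas the bootstrap needs $\|\chi_j D(\lambda)\chi\|_{\mc{H}\to\mc{D}^j}$ for the enlarged left cutoffs $\chi_j$; this is recovered from the given bound by an outgoing/exterior estimate — since $R_P(\pm\lambda)\chi f$ solves $(-\Delta-\lambda^2)u=0$ on $\re^d\setminus B(0,R_1-\epsilon)$ and is outgoing, cutting it off at $\chi_j$ is controlled by its restriction to an intermediate set, exactly as in the Vainberg-type argument of Section~\ref{sec:resFree} and \cite[Section 4.6]{ZwScat}. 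Both are standard but are where the genuine care lies.
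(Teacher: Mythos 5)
Your route (b) --- the elliptic bootstrap from $PD(\lambda)=D(\lambda)P=\lambda^{2}D(\lambda)$, trading each application of $P$ for a factor $\la\lambda\ra^{2}$ plus a commutator $[\Delta,\chi]$ that loses one derivative but no powers of $\lambda$, with the transposed identity handling the input side --- is exactly the paper's proof, which likewise inducts on integer orders and then passes to general $s,r$ (the paper by interpolation, you by functional calculus). You are also right that the commutator terms require bounds with enlarged left cutoffs $\chi_{j}\succ\chi$ not literally covered by the hypothesis; the paper's displayed identity quietly inserts an extra $\chi$ to sidestep this, and your fix via the standard exterior/free-resolvent identity is the correct justification, at the harmless cost of shifting the constants $M$ and $T$.
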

\begin{proof}
First, observe that
\begin{align*}P\chi(R_P(\lambda)-R_P(-\lambda))\chi &=\lambda^2\chi(R_P(\lambda)-R_P(-\lambda))\chi -[\Delta,\chi]\chi(R_P(\lambda)-R_P(-\lambda))\chi 
\end{align*}
Notice that $[\Delta,\chi]:\mc{D}^s\to \mc{D}^{s-1}$ so, 
\begin{multline*}\|\chi(R_P(\lambda)-R_P(-\lambda))\chi \|_{\mc{D}^s\to \mc{D}^{r+s+2}}\leq C\la \lambda\ra^2\|\chi(R_P(\lambda)-R_P(-\lambda))\chi \|_{\mc{D}^s\to \mc{D}^{r+s}}\\+C\|\chi(R_P(\lambda)-R_P(-\lambda))\chi \|_{\mc{D}^s\to \mc{D}^{r+s+1}}.
\end{multline*}
Similarly,
\begin{align*}\chi(R_P(\lambda)-R_P(-\lambda))\chi P &=\lambda^2\chi(R_P(\lambda)-R_P(-\lambda))\chi -\chi(R_P(\lambda)-R_P(-\lambda))\chi [\Delta,\chi]
\end{align*}
so, 
\begin{multline*}
\|\chi(R_P(\lambda)-R_P(-\lambda))\chi \|_{\mc{D}^{s+2}\to \mc{D}^{r+s}}\leq C\la \lambda\ra^{-2}\left(\|\chi(R_P(\lambda)-R_P(-\lambda))\chi \|_{\mc{D}^s\to \mc{D}^{r+s}}\right.\\\left.+C\|\chi(R_P(\lambda)-R_P(-\lambda))\chi \|_{\mc{D}^{s+1}\to \mc{D}^{r+s}}\right).
\end{multline*}
The claim then follows by induction and interpolation.
\end{proof}
To prove Theorem \ref{thm:mainConverse}, we use an argument similar to that used to prove a resonance expansion in \cite[Theorem 2.7, 4.42]{ZwScat}, \cite{TangZw}.

First recall that by the spectral theorem together with the description of the spectrum of black box Hamiltonians, (see for example \cite[Theorem 4.5]{ZwScat}) and that by assumption $\chi R_P(\lambda)\chi$ is analytic in 
$$\{\Im \lambda\geq 0,\,\,|\Re\lambda|\geq R\}$$
we have
\begin{align*} 
U(t)\chi g&=\sum_{k=1}^K\frac{\sin t\mu_k}{\mu_k}v_k\la \chi g,v_k\ra+\int_0^\infty \frac{\sin t\sqrt{z}}{\sqrt{z}}dE_{z}(\chi g)\\
&=\sum_{k=1}^K\frac{\sin t\mu_k}{\mu_k}v_k\la \chi g,v_k\ra+\int_0^{R^2}\frac{\sin t\sqrt{z}}{\sqrt{z}}dE_{z}(\chi g)+\int_{R^2}^\infty \frac{\sin t\sqrt{z}}{\sqrt{z}}dE_{z}(\chi g)\\
&=\RN{1}+\RN{2}+\RN{3}
\end{align*}
where $\mu_k^2$ are the eigenvalues of $P$ with $v_k$ the corresponding normalized eigenfunctions. Here we choose $R>0$ so that there are no poles of $(P-\lambda^2)^{-1}$ on $\re \setminus (-R,R).$ This is possible by assumption. 
Now, notice that since $v_k\in \mc{D}^s$ for all $s$, 
$$\|\RN{1}\|_{\mc{D}^s}\leq C_N\|g\|_{\mc{D}^{-N}}.$$
Similarly, 
$$\int_0^{R^2}\frac{\sin t\sqrt{z}}{\sqrt{z}}dE_z:\mc{D}^{-N}\to \mc{D}^\infty$$
for all $N$ and $\chi:\mc{D}^s\to \mc{D}^s$ for all $s$. Therefore, 
$$\|\RN{2}\|_{\mc{D}^s}\leq C_N\|g\|_{\mc{D}^{-N}}$$
and we need only consider $\RN{3}$.

Now, using Stone's formula (see for example \cite[Theorem B.8]{ZwScat}), 
$$dE_z=\frac{1}{2\pi i}((P-z-i0)^{-1}-(P-z+i0)^{-1}) dz.$$
Then, making the change of variables, $z=\lambda^2$, we have 
$$dE_z=\frac{1}{\pi i}(R_P(\lambda+i0)-R_P(-\lambda +i0))\lambda d\lambda.$$
where $R_P(\lambda)=(P-\lambda^2)^{-1}$.
Then
\begin{align*} 
\RN{3}&=\frac{1}{2\pi }\lim_{\e\to 0^+}\int_R^\infty \left(e^{-it\lambda}-e^{it\lambda}\right)\chi (R_P(\lambda+i\e)-R_P(-\lambda+i\e))\chi gd\lambda \\
&=\frac{1}{2\pi}\int_{\Sigma_R} e^{-it\lambda}\chi(R_P(\lambda)-R_P(-\lambda))\chi gd\lambda
\end{align*}
where $\Sigma_R:=\re \setminus(-R,R)$. 

Let
$$I_\Sigma g:=\frac{1}{2\pi}\int_\Sigma e^{-it\lambda}\chi(R_P(\lambda)-R_P(-\lambda))\chi d\lambda.$$ 
In order to justify the convergence of $I_{\Sigma_\e}g$, we take $g\in \mc{D}^{M+2}$. Then Lemma \ref{lem:upgrade} implies the integral is norm convergent in $L^2$. We will be able to conclude using the fact that $\mc{D}^{M+2}$ is dense in $\mc{D}^s$ for all $s\leq M+2$. 

We deform the contour to $\Sigma_{R,\log}:=\gamma_{\pm}\cup \gamma_{\pm\log}$ where 
\begin{gather*} 
\gamma_{\pm}=\{\pm R-it\mid 0\leq t\leq L\log |R|\},\quad\quad\gamma_{\pm\log}=\{\pm t-iL\log |t|\mid R\leq t<\infty\}
\end{gather*}
Now, we have chosen $R>\lambda_0$ so that on 
$$\{|\Re \lambda|>\lambda_0\,,\,\,\Im \lambda >-L\log |\Re \lambda|\}$$
$$\|\chi R_P\chi \|\leq C|\lambda|^Me^{T|\Im \lambda|}.$$
Then by the norm convergence of the integral over $\Sigma_{R}$ we can deform the contour to 
$I_{\Sigma_{R}}g=I_{\Sigma_{R,\log}}g$.
We first estimate
\begin{gather*} 
\|I_{\gamma_\pm}g\|_{\mc{D}^{s+N}}\leq C\int_{0}^{L\log|R|} \la R\ra^{M+N}e^{T\lambda}\|g\|_{\mc{D}^s}d\lambda\leq C_R\|g\|_{\mc{D}^s}.
\end{gather*}
Finally, 
\begin{align*}
\|I_{\gamma_{\pm\log}}g\|_{\mc{D}^{s+N}}&\leq C\int_R^\infty e^{-tL\log \lambda}\lambda ^{M+N}e^{T\log \lambda}\|g\|_{\mc{D}^s}\leq C\int_R^\infty e^{(M+N+T-tL)\log \lambda}d\lambda\|g\|_{\mc{D}^s}
\end{align*}
This integral converges precisely when
$$t>\frac{M+N+T}{L}=T_N-\frac{1}{L}.$$
This completes the proof of the theorem when $s\geq M+2$ and for $s\leq M+2$, the density of $\mc{D}^{M+2}\subset \mc{D}^s$ completes the proof of the theorem.

\section{Lower bounds on the number of resonances in logarithmic regions}
\label{sec:existence}
We now prove Theorem \ref{thm:mainRes}. We have made assumptions so that wave trace of our problem, 
$$u(t)=\tr W(t)-W_0(t)\in\mc{D}'(\re)$$
is well defined.
\begin{remark}
Notice that this is not quite the actual definition of the wave trace since the operators act on different spaces. See the statement of Theorem \ref{thm:mainRes} for the precise formula.
\end{remark}
By \cite{SjoLocalTrace, SjZwJFA,ZwPoisson1}, we see that for $\varphi\in \Cc((0,\infty))$,
$$\varphi(t)u(t)= \sum_{\lambda\in \Lambda_\gamma} m(\lambda)e^{-i\lambda|t|}\varphi(t)+\O{C^\infty}(1)\,,\,\,$$
where 
$$\Lambda_\gamma:=\{\lambda \in \Lambda\mid \Im \lambda\geq -\gamma |\lambda|\}. $$ Moreover, by assumption, we have that for any $a>0$, there exists $C>0$ so that 
$$\#\{\lambda\in \Lambda\mid |\lambda |\leq r,\,|\arg \lambda|\leq a\}\leq Cr^{n^\sharp}.$$ 
By assumption, for all $\varphi\in \Cc((0,\infty))$ supported sufficiently close to $T_k$ with $\varphi(T_k)=1$, 
$$|\widehat{\varphi u}(\tau)|\geq C_k|\tau|^{-N_k}.$$
Therefore, by \cite[Theorem 10.1]{SjoLocalTrace} (see also  \cite[Theorem 1]{SjoZw}) for sufficiently small $\e>0$, $\delta>0$ and  any $\rho>(n^\sharp-N_k)/(T_k-\e^2)$, there exist $r_0>0$ such that 
$$N(r,\rho)>r^{1-\delta},\quad\quad r>r_0.$$
Now, letting $k\to \infty$ proves the theorem since $T_k\to \infty$.

\section{Distribution of resonances in scattering in the presence of conic points}
\label{sec:conic}
We now give the application of Theorems \ref{thm:main} and \ref{thm:mainRes} to scattering in the presence of conic singularities. For this, we recall the notation and results from \cite{BaWu}. 
\subsection{Geometric setup}
\label{sec:conSetup}
Let $X$ be a smooth noncompact manifold with boundary, $\partial X=Y$, $K$ a compact subset of $X$ and $g$ a Riemannian metric on $X^o$ such that $X\setminus K$ is isometric to $\re^d\setminus \overline{B(0,R_0)}$ for some $R_0>0$ and such that $g$ has conic singularities at the boundary of $X$ i.e.
$$g=dx^2+x^2h(x,dx,y,dy)$$
where $g$ is nondegenerate on $X^o$ and $h|_{\partial X}$ induces a metric on $\partial X$. We let $P=-\Delta_g$ be the Friedrichs extension of $-\Delta_g$ from $\Cc(X^o)$. Then $P$ is a black box Hamiltonian as described in the introduction and satisfies the assumptions of Theorem \ref{thm:mainRes}.

Let $Y_{\alpha}$, $\alpha =1,\dots N$ denote the connected components of $Y$. We call these the \emph{cone points of the manifold} since viewed in the manifold $X$ with metric $g$, they reduce to single points. Then, let $M:=\re\times X$ denote the spacetime manifold. 

Now, let $^X\mc{F}^s_\alpha$ denote the set of bicharacteristics in $T^*X^o$ whose continuations forward and backward in time reach $Y_\alpha$ in time $|t|\leq s$. It will sometimes be useful to refer to the incoming and outgoing parts of $^X\mc{F}^s_{\alpha}$ where the incoming and outgoing parts are given by the bicharacteristics whose forward, respectively backward, continuation reaches $Y_{\alpha}$. We write $\mc{F}^s_\alpha$ for the corresponding set in $T^*M^o$ i.e. the time $s$ flow out from the boundary $Y$. The manifolds $^X\mc{F}^s_\alpha$ and $\mc{F}^s_\alpha$ are \emph{coisotropic} manifolds respectively in $T^*X^o$ and $T^*M^o$. 

Next we define the notion of a diffractive geodesic. 
\begin{defin}
A \emph{diffractive geodesic} on $X$ is a union of a finite number of closed, oriented geodesic segments $\gamma_1,\dots, \gamma_N$ in $X$ such that:
\begin{enumerate}
\item all end points except possibly the initial point of $\gamma_1$ and the final point of $\gamma_N$ lie in $Y$
\item $\gamma_i$ ends at the same boundary component as $\gamma_{i+1}$ begins for $i=1,\dots , N-1$. 
\end{enumerate}
For such a geodesic, the number of cone points through which $\gamma$ diffracts is given by
$$\mc{N}_\gamma:=N$$
\end{defin}

Next, we define the notion of a geometric geodesic.
\begin{defin}
A \emph{geometric geodesic} on $X$ is a diffractive geodesic such that in addition the final point of $\gamma_i$ and the initial point of $\gamma_{i+1}$ for $i=1,\dots, N-1$ are connected by a geodesic of length $\pi$ in a boundary component $Y_{\alpha}$ with the metric $h_\alpha =h|_{Y_\alpha}.$ 
\end{defin}
\noindent Geometric geodesics are those that are locally realizable as limits of of families of geodesics in $X^o$ as they approach a given boundary component \cite[Proposition 1]{BaWu}. 
\begin{defin}
\label{def:strictDiffrac}
A diffractive geodesic is \emph{strictly diffractive} if for $i=1,\dots, N-1$, the final point of $\gamma_i$, $y_i$ and the initial point of $\gamma_{i+1}$ have $d_{h_{\alpha_i}}(y_i,y_{i+1})\neq \pi.$
\end{defin}
For each component of $Y$, $Y_{\alpha}$, let $-\Delta_{\alpha}$ denote the Laplacian on $Y_\alpha$ with respect to the metric $h_\alpha:=h|_{Y_{\alpha}}.$ Then define the operators as in \cite{FoWu},
$$\nu_\alpha:=\sqrt{-\Delta_\alpha+\left(\frac{2-d}{2}\right)^2},\quad\quad \mc{D}_\alpha:=e^{-i\pi \nu_\alpha}.$$
Then for a strictly diffractive geodesic, $\gamma$, let $y_i\in Y_{\alpha_i}$ be the final point of $\gamma_i$ and $x_i\in Y_{\alpha_i}$ be the initial point of $\gamma_{i+1}$. We define the \emph{diffraction coeffiction} of $\gamma$ by 
\begin{equation}\label{eqn:dCoeff}
\mc{D}_\gamma:=\prod_{i=1}^{N-1}|K_{\mc{D}_{\alpha_i}}(x_i,y_i)|
\end{equation}
where $K_{\mc{D}_{\alpha_i}}$ is the Schwartz kernel of $\mc{D}_{\alpha_i}.$

We can now write our assumptions for Theorem \ref{thm:resFreeConic} more precisely 
\begin{enumerate}
\item Let $\Omega \supset K$ be open with $X\setminus \Omega$ isometric to $\re^d\setminus B(0,R_1)$ for some $R_1>0$. We assume that there exists $T_0>0$ so that any geometric geodesic starting in $S^*_KX^o$ leaves $\Omega$ in time less than $T_0$. 
\item No geometric geodesic passes through three cone points.
\item No two cone points $Y_\alpha$ and $Y_\beta$ are conjugate to one another along geodesics in $S^*X^o$ of lengths less than $T_0$ in the sense that whenever $s+t\leq T_0$, $\mc{F}^s_\alpha$ intersects $\mc{F}^t_\beta$ transversally for each $\alpha, \beta$. 
\end{enumerate}
\begin{remark}
The third assumption is called non-conjugacy because by \cite[Proposition 6]{BaWu} the non-transversal intersection of $\mc{F}_{\alpha}^s$ and $\mc{F}_\beta^s$ is equivalent to the existence of a geodesic $\gamma$ with $\gamma(0)\in Y_\alpha$, $\gamma(t')\in Y_{\beta}$ with $(t'<s+t)$ and a normal Jacobi field $W$ along $\gamma$ so that $W$ is tangent to both $Y_\alpha$ and $Y_\beta$. 
\end{remark}
Throughout this section, it will be crucial to use the propagation of singularities theorem on manifolds with conic points due to Melrose--Wunsch \cite[Theorem 1.1]{MelWu}, originally observed in the case of product cones by Cheeger--Taylor \cite{CheegTayl}  (see also the more general setting of edge manifolds in Melrose--Vasy--Wunsch \cite{MelVaWu}). We state this theorem only informally and refer the reader to the original paper for the precise statement.
\begin{theorem}[Melrose--Wunsch \cite{MelWu}]
\label{thm:propSing}
Suppose that $u$ solves 
$$(\partial_t^2-\Delta_g )u=0.$$
Then $\WF(u)$ is contained in a union of maximally extended diffractive geodesics.
\end{theorem}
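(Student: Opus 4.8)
Since Theorem~\ref{thm:propSing} is precisely the main result of Melrose--Wunsch \cite{MelWu}, for the purposes of this paper the plan is simply to invoke it; but let me sketch how one would organize a proof. Away from the cone points $Y_\alpha$, $-\Delta_g$ is a classical second order operator with real scalar principal symbol, so H\"ormander's propagation of singularities theorem already shows that $\WF(u)$ is a union of null bicharacteristics, i.e.\ of lifts of ordinary geodesics in $X^o$; the entire difficulty is to see what happens when such a bicharacteristic runs into a cone point. The first step is therefore to install the correct phase space near $Y_\alpha$: writing the metric in the model conic form $g=dx^2+x^2h$ and passing to the ``b'' (compressed) cotangent bundle, in which the fiber over $x=0$ collapses the tangential momentum, a bicharacteristic arriving radially at a fixed cone point and time becomes a single point, continuable a priori by \emph{every} outgoing radial bicharacteristic. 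Showing that these are the only admissible continuations is exactly the statement that $\WF(u)$ lies in a union of maximally extended diffractive geodesics.

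The core of the argument is a positive commutator estimate adapted to this conic geometry. I would decompose $u$ near $Y_\alpha$ into eigencomponents of the link Laplacian $\Delta_{Y_\alpha}$ and analyze, mode by mode, the resulting equation in the single radial variable $x$: the lowest modes reproduce the radial reduction of the wave equation on a flat cone over $Y_\alpha$ (when the link is the round sphere this is literally $\re^d$), where the diffracted wave is actually produced, and can be treated by an essentially explicit model computation; the high modes are then controlled by an iterative regularity bootstrap of exactly H\"ormander's type, run in the b-pseudodifferential calculus with commutants microlocalized near the incoming and outgoing radial sets over $x=0$. Splicing these conic estimates with the interior theorem of H\"ormander yields the claimed structure of $\WF(u)$.

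The main obstacle, and the heart of \cite{MelWu}, is propagating the microlocal estimate \emph{through} the tip $x=0$, where the conormal direction degenerates and the transversal ellipticity that powers the usual propagation argument disappears; one must instead play radial propagation in $x$ against the tangential dynamics on $Y_\alpha$, with an uncertainty-principle cut of the $\Delta_{Y_\alpha}$ spectrum at frequency $\sim x^{-1}$ separating the two regimes. A further care point is that the statement is for the Friedrichs extension of $-\Delta_g$, so all commutant and cutoff constructions must be compatible with that domain. I do not reproduce any of this, and refer to \cite{MelWu} for the full argument, and to Melrose--Vasy--Wunsch \cite{MelVaWu} for the corresponding result on edge manifolds.
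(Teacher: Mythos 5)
The paper offers no proof of this theorem: it is quoted from Melrose--Wunsch \cite{MelWu} with an explicit disclaimer that the statement is only informal and the reader is referred to the original paper. Your proposal does exactly the same thing --- invoke \cite{MelWu} --- and your accompanying sketch of the b-calculus/positive-commutator strategy is a fair description of that reference, so there is nothing to reconcile.
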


\subsection{Proof of Theorem \ref{thm:resFreeConic}}

Fix $\chi\in \Cc(X)$ with $\chi \equiv 1$ on $K$. Then let $K_1$ be a compact set containing $\supp \chi$. By assumption, there exists $T_0$ such that any geometric bicharacteristic starting in $\supp \chi$ leaves $K_1$ in time $T_0$. 

We now decompose $U(t)$ as in \cite[Section 3]{BaWu}. Let 
$$D_{\min}:=\min_{\alpha,\beta}d(Y_{\alpha},Y_\beta).$$ 
Then fix 
$\delta_A\ll 1$, $\delta_\psi\ll D_{\min}$
and let $\psi_\alpha\in \Cc(X)$ with 
$$\psi_\alpha\equiv 1\text{ on }\{d(x,Y_\alpha)<\delta_\psi/4\},\quad\quad\supp \psi_\alpha\subset \{d(x,Y_\alpha)<\delta_\psi\}. $$
 Next, let $\varphi \in C^\infty(X)$ equal to 1 outside of $\Omega$ and have $\varphi\equiv 0$ on $\supp \chi$. Finally, let $A_j$ $(j=1,\dots,N)$ be a pseudodifferential partition of $I-\sum\psi_\alpha-\varphi$ in which each $A_j$ has $\text{diam}(\WF(A_i))<\delta_A$ with respect to some metric on $S^*X$ and $\WF(A_i)\subset K_1$. In particular, so that we have 
$$I-\sum_\alpha \psi_\alpha -\varphi -\sum_iA_i\in \Psi_{\comp}^{-\infty}(X^o)$$
where $\Psi_{\comp}^{-\infty}$ denotes the set of compactly supported smoothing pseudodifferential operators.

We first consider the wave propagator precomposed with a cutoff away from the cone points. To do this, we decompose $U(t)$ into operators of the form
$$\mc{T}_J:=A_{j_0}U(t_0)A_{j_1}U(t_1)\dots A_{j_k}U(t_k)A_{j_{k+1}}$$
where $J=(j_0,\dots j_{k+1})$ is a word. We say that $J$ is \emph{diffractively realizeable} (DR) if there are points $p_l\in \WF(A_l)$, $p_{l+1}\in \WF(A_{l+1})$, $j=1,\dots k$ so that $p_{l}$ and $p_{l+1}$ are connected by a diffractive geodesic of length $t_l$. Similarly, we say that $J$ is \emph{geometrically realizeable} (GR) if there are points $p_l\in\WF(A_l)$ and $p_{l+1}\in \WF(A_{l+1})$, $j=1,\dots k$ so that $p_l$ and $p_{l+1}$ are connected by a geometric geodesic of length $t_l$.  

Recall the definition of the space $\mc{R}$ given in \eqref{eqn:residual}.
Then we have the following properties of $\mc{T}_J$ \cite[Section 3]{BaWu}
\begin{lemma}
\label{lem:propSingConsequence}
\begin{enumerate}
\item If the word $J$ is not DR then $\mc{T}_J\in \mc{R}.$
\item
There exists $\delta_A$ small enough so that if $t_0+t_1+\dots d_k>2T_0$, then the word $J=(j_0,\dots,j_{k+1})$ is not GR. \\
\item If $\delta_A$ is sufficiently small and $J=ijkl$ where $jk$ is GR and $ij$, $jk$, and $kl$ all interact with cone points then $\mc{T}_J\in \mc{R}.$
\end{enumerate}
\end{lemma}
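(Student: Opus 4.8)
The plan is to prove each of the three statements by combining the propagation of singularities theorem (Theorem \ref{thm:propSing}) with a careful geometric analysis of which diffractive geodesics can be realized inside the small wavefront-set boxes $\WF(A_l)$.

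For part (1), I would argue contrapositively. Suppose $\mc{T}_J=A_{j_0}U(t_0)A_{j_1}\cdots A_{j_k}U(t_k)A_{j_{k+1}}$ is not residual, i.e. it does not map $\mc{D}^{-\infty}\to \mc{D}^\infty_{\comp}$. Since each $A_l$ is a compactly supported ps.d.o.\ with small wavefront set, and $U(t_l)$ propagates singularities along maximally extended diffractive geodesics, a singularity can survive the composition only if there is a singularity at some point of $\WF(A_{j_{k+1}})$ that, after flowing for time $t_k$ along a diffractive geodesic, hits $\WF(A_{j_k})$, then flows for time $t_{k-1}$ to hit $\WF(A_{j_{k-1}})$, and so on. Concatenating these segments produces a single diffractive geodesic witnessing that $J$ is DR. The only subtlety is composing the propagation statements across the cutoffs: one applies Theorem \ref{thm:propSing} iteratively, using that $A_l U(t_l)$ is microlocalized, and that residual operators form a two-sided ideal so that inserting a residual factor anywhere in the product makes the whole thing residual. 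I would also note that the flow-outs from the cone regions $\psi_\alpha$ are handled separately since the $A_l$ avoid neighborhoods of $Y$ by construction, so the relevant geodesics are genuine geometric/diffractive ones between interior points.

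For part (2), the point is a compactness/continuity argument on the space of geometric geodesics. By assumption (1) in Section~\ref{sec:conSetup}, any geometric geodesic starting in $S^*_{K_1}X^o$ leaves $\Omega$ (and hence $K_1$) in time less than $T_0$; since the $A_l$ are all microlocalized in $K_1$, any geometrically realizable word must consist of segments that each begin and end inside $K_1$. A geometric geodesic of total length $>2T_0$ through $K_1$ would have to re-enter $K_1$ after leaving, and re-enter again, which the nontrapping hypothesis forbids for genuine geometric geodesics; more precisely, a limit argument (letting $\delta_A\to 0$) together with the closedness of the set of geometric geodesics (they are limits of interior geodesics, \cite[Proposition 1]{BaWu}) shows that for $\delta_A$ small enough no word with $\sum t_l>2T_0$ can be GR, because in the limit one would obtain an honest geometric geodesic of length $>2T_0$ staying in $\overline{K_1}$ long enough to contradict nontrapping. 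The main obstacle here is making the limiting argument clean: one must ensure that a sequence of broken approximate geometric geodesics (one for each value of $\delta_A$) subconverges to an actual geometric geodesic, which uses the coisotropic/closed structure of the flow-outs $\mc{F}^s_\alpha$ and assumption (3) (non-conjugacy) to rule out degenerations.

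For part (3), this is where I expect the real work to be, and it is the heart of the Baskin--Wunsch mechanism. The claim is that a word $ijkl$ in which the middle step $jk$ is geometrically realizable and all three steps interact with cone points is residual. The idea: if $jk$ is GR, then by assumption (2) (no geometric geodesic passes through three cone points) the geometric geodesic realizing $jk$ diffracts through at most one cone point, and crucially a geometric diffraction produces \emph{no} new singularities beyond those forced by the geometric continuation — the genuinely diffractive (non-geometric) wavefront produced at a cone point is smoother by the Melrose--Wunsch estimate. The factors $ij$ and $kl$ interacting with cone points means the singularity entering the $jk$ step arrived via a diffraction and leaves via a diffraction; combined with the geometric nature of the middle segment and the non-conjugacy hypothesis (3), which guarantees the relevant flow-outs meet transversally so that the composition of the associated Lagrangian/coisotropic relations loses the requisite regularity, one concludes $\mc{T}_J$ gains infinitely many derivatives, i.e. lands in $\mc{R}$. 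Concretely I would: (a) use Theorem \ref{thm:propSing} to pin down that any surviving singularity traverses a diffractive geodesic through the pattern ``diffract, geometric segment, diffract''; (b) invoke assumption (2) to force the middle geometric segment to have at most one cone point, so it is either non-diffractive or a single geometric diffraction; (c) apply the quantitative diffractive improvement at cone points from \cite{MelWu, FoWu} together with the transversality from assumption (3) to show each such composition is smoothing; (d) since $A_l\in \Psi^{-\infty}_{\comp}$ has compact microsupport, the resulting operator maps $\mc{D}^{-\infty}$ into $\mc{D}^\infty_{\comp}$, i.e.\ is residual. The hard part throughout is step (c): correctly bookkeeping the regularity gained at each cone-point diffraction and verifying that the geometric middle segment does not ``reset'' this gain, which is exactly where assumptions (2) and (3) are used and where I would lean most heavily on the microlocal results of Baskin--Wunsch and Ford--Wunsch rather than reproving them.
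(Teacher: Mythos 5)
First, a point of reference: the paper does not prove Lemma \ref{lem:propSingConsequence} at all --- it is quoted from \cite[Section 3]{BaWu} --- so the comparison below is with the argument that reference (and the surrounding structure of Section \ref{sec:conic}) actually requires. Your treatments of parts (1) and (2) are essentially right: part (1) is the standard ``propagation of singularities plus the ideal property of $\mc{R}$'' argument (though note that DR only demands that each \emph{consecutive pair} be connected, so the cleanest statement is the contrapositive pair-by-pair: if some pair $j_lj_{l+1}$ is not diffractively connected, then $A_{j_l}U(t_l)A_{j_{l+1}}\in\mc{R}$ by Theorem \ref{thm:propSing}), and part (2) is the nontrapping/compactness argument you describe, except that assumption (3) (non-conjugacy) plays no role there --- what you need is only that geometric geodesics through $K_1$ exit $\Omega$ in time $T_0$ and, being straight lines in the convex Euclidean exterior, never return, together with continuous dependence as $\delta_A\to 0$.

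Part (3) contains a genuine gap. Your step (c) concludes that $\mc{T}_J\in\mc{R}$ from the diffractive improvement of Melrose--Wunsch plus the transversality in assumption (3). But the diffractive improvement yields a \emph{finite} gain of $(d-1)/2-0$ derivatives per strictly diffractive interaction --- that is exactly the content of the \emph{other} lemma in this section (the one about $k+1$ consecutive $D$'s), and no finite accumulation of such gains can ever produce membership in $\mc{R}$, which requires mapping $\mc{D}^{-\infty}\to\mc{D}^\infty_{\comp}$. The only way $\mc{T}_J$ can be residual is if \emph{no} diffractive geodesic realizes the configuration at all, so that Theorem \ref{thm:propSing} gives $C^\infty$ outright. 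That is precisely what assumption (2) delivers here, and it is used in a different way than you use it: if $ij$, $jk$, $kl$ all interact with cone points and the $jk$ passage through its cone point is geometric, then the sub-geodesic running from the cone point met during $ij$, geometrically through the cone point met during $jk$, to the cone point met during $kl$ \emph{is} a geometric geodesic passing through three cone points --- the two endpoints impose no geometricity condition, only the middle junction does --- and assumption (2) forbids exactly this. A compactness argument in $\delta_A$ (the junctions at $\WF(A_j)$ and $\WF(A_k)$ match up to error $\delta_A$, and a limit of such broken configurations would converge to a genuine geometric geodesic through three cone points) then shows that for $\delta_A$ small no realizing diffractive geodesic exists, and part (1) applies. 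Your reading of assumption (2) --- that ``the middle geometric segment has at most one cone point'' --- is true but does not engage the three-cone-point obstruction, and assumption (3) is not needed for this part; it enters only in the consecutive-$D$ smoothing lemma via the nonfocusing/coisotropic-regularity mechanism.
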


It will be convenient to have notation for singularities that leave $K_1$ and never return. For this let 
$$\mc{O}:=\left\{(t,z,\tau,\zeta)\in T^*(M\setminus (\re\times \Omega))\mid \la z,\zeta/\tau\ra>0\right\}$$ 
be the outgoing set. It is not hard to see that the set $\mc{O}$ is mapped to itself by the positive time geodesic flow and any bicharacteristic starting in $\supp \chi$ that escapes $\Omega$ lies in $\mc{O}$ over $X\setminus \Omega$. We define $L^2H^s(\mc{O})$ to be the set of distributions whose wavefront set lies in $\mc{O}$ and lie in $L^2([0,T];H^s)$.
We now prove our main lemma which is a slight improvement on \cite[Lemma 5]{BaWu}. Let 
$$D_{\max}:=\max_{\alpha,\beta}\sup\{t\mid Y_\alpha,\,Y_\beta\text{ are connected by a geometric geodesic of length }t\}.$$ 
Note that $D_{\max}<\infty$ since it is clearly bounded by $T_0$. Thus, by \cite[Proposition 3]{BaWu}, $D_{\max}$ is achieved by some geometric geodesic connecting $Y_\alpha$ and $Y_\beta$ for some $\alpha, \beta$. 

\begin{lemma}
\label{lem:awayConic}
For all $N>0$, there exist $\delta_A$, $\delta_\psi$ sufficiently small so that for each $m$ and $s$ and all $t>(N+3)D_{\max}+2T_0+100=:T_N$
$$U(t)A_m:\mc{D}^s\to L^2\mc{D}^{s+N(d-1)/2-0}+L^2H^s(\mc{O}).$$
\end{lemma}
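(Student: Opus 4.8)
\textbf{Proof plan for Lemma \ref{lem:awayConic}.}
The plan is to iterate the decomposition of $U(t)$ into the operators $\mc{T}_J=A_{j_0}U(t_0)\cdots A_{j_k}U(t_{k+1})A_{j_{k+1}}$ together with the cone-point localizers $\psi_\alpha$, and to track how many derivatives are gained each time a diffractive geodesic passes through a cone point. First I would fix a large word-length $k_0$ (depending on $N$ and on $T_0/D_{\min}$, since in time $T_0$ a diffractive geodesic can hit only boundedly many cone points before escaping) and write $U(t)=\bigl(\sum_\alpha\psi_\alpha+\varphi+\sum_i A_i+\text{residual}\bigr)U(t)\cdots$, composing until the total propagation time of the "$A$-blocks" exceeds $2T_0$, so that by Lemma \ref{lem:propSingConsequence}(2) any surviving word is not GR; combined with Theorem \ref{thm:propSing} (Melrose--Wunsch propagation) this forces each such surviving segment to be strictly diffractive, so the ``glancing to glancing'' passage through a cone point gains $(d-1)/2-0$ derivatives via the diffractive improvement (the Friedlander/Cheeger--Taylor--Melrose--Wunsch diffracted wave estimate, as used in \cite[Section 3]{BaWu}, \cite{FoWu}); terms that are not DR are residual by Lemma \ref{lem:propSingConsequence}(1). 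The key point is that after time $(N+3)D_{\max}+2T_0+100$ every geodesic issuing from $\supp A_m$ either has already left $\Omega$ and entered the outgoing region $\mc{O}$ (contributing to the $L^2H^s(\mc{O})$ term) or has been forced to diffract strictly through at least $N+1$ cone points, because between consecutive cone-point diffractions the elapsed time is at most $D_{\max}$ and we have allowed more than $(N+1)D_{\max}$ of "cone time" beyond the initial $2T_0$ needed to exit the geometric regime and the final $T_0$ needed to escape.

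Concretely, the steps in order: (i) choose $\delta_A,\delta_\psi$ small enough that Lemma \ref{lem:propSingConsequence} applies and that each $\psi_\alpha$ is supported in a small neighbourhood of a single cone point with the links $(Y_\alpha,h_\alpha)$ visible; (ii) expand $U(t)A_m$ by repeatedly inserting the partition of unity and discarding residual (non-DR) words and, after total $A$-time $>2T_0$, all GR words, so that only strictly diffractive words remain; (iii) for a strictly diffractive word, write the passage through each cone point using the parametrix for the diffracted wave near $Y_\alpha$, i.e.\ commute past $\psi_\alpha U(\cdot)$ and use that the diffracted part maps $H^s\to H^{s+(d-1)/2-0}$ on the relevant microlocal region (this is exactly the content behind \cite[Lemma 5]{BaWu} and the Ford--Wunsch diffraction calculus); (iv) count: with elapsed time $>T_N$, a geodesic that has not escaped must have undergone at least $N+1$ such strictly diffractive passages, yielding a total gain of at least $(N+1)(d-1)/2$ derivatives, hence mapping $\mc{D}^s\to L^2\mc{D}^{s+N(d-1)/2-0}$ after absorbing one passage's worth into the "$-0$"; (v) the geodesics that do escape land in $\mc{O}$ over $X\setminus\Omega$ by the remark preceding the lemma, and propagation of singularities on $\re^d$ keeps them there, giving the $L^2H^s(\mc{O})$ summand; (vi) sum the finitely many remaining words and note the implied constants are uniform in $m$ since there are finitely many $A_i$.

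The main obstacle, and the place where care beyond \cite[Lemma 5]{BaWu} is needed, is step (iv)--(v): making the bookkeeping quantitative so that the number of strictly diffractive passages is genuinely at least $N+1$ (and not $N$) uniformly over all diffractive geodesics of length $>T_N$, while simultaneously ensuring that \emph{only} strictly diffractive passages contribute derivative gain — one must rule out the degenerate scenario in which a geodesic lingers near a cone point, or repeatedly hits cone points separated by geometric (length-$\pi$ link) segments, since such geometric passages gain no derivatives. This is handled by the "no three cone points are geometrically collinear" hypothesis (2) via Lemma \ref{lem:propSingConsequence}(3): a word $ijkl$ in which $jk$ is GR and all three junctions interact with cone points is residual, so a geometric segment can occur at most once before the geodesic is killed or escapes, and thus contributes a bounded (absorbable) loss; the constant $100$ in $T_N$ is the slack absorbing these finitely many exceptional geometric passages and the final escape time $T_0$. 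The non-conjugacy hypothesis (3) enters in ensuring the coisotropic manifolds $\mc{F}^s_\alpha$ meet transversally so that the diffracted-wave estimate of \cite{FoWu} applies with the clean $(d-1)/2$ gain at each passage; I would invoke it exactly where \cite[Lemma 5]{BaWu} does. The rest is routine composition of microlocal estimates and $L^2_t$-energy bounds for the wave equation.
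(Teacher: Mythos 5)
Your overall architecture is the same as the paper's: iterate the pseudodifferential partition, discard non-DR words as residual, use assumption (2) via Lemma \ref{lem:propSingConsequence}(3) to forbid the strings DGD, GGD, DGG, GGG so that long surviving words contain a long run of consecutive diffractions, bound each inter-diffraction time by $D_{\max}+\e$, and send everything that escapes $\Omega$ into the $L^2H^s(\mc{O})$ summand. That all matches.

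The one step that would fail as written is your per-passage derivative count. You assert that each strictly diffractive passage maps $H^s\to H^{s+(d-1)/2-0}$ and hence that $N+1$ passages gain $(N+1)(d-1)/2$, which you then ``absorb'' down to $N(d-1)/2-0$. A single diffraction does \emph{not} gain $(d-1)/2$ for arbitrary $\mc{D}^s$ data: the Cheeger--Taylor/Melrose--Wunsch improvement on the strictly diffracted front requires the incoming wave to be \emph{nonfocusing} relative to $H^{s+(d-1)/2}$, and generic $H^s$ data (e.g.\ a wave focusing exactly on the cone tip) is not. The mechanism the paper uses --- and the reason the non-conjugacy hypothesis (3) enters --- is that the \emph{first} diffraction produces only coisotropic regularity relative to $H^{s-0}$ along $\mc{F}_{\alpha_1}$, and it is the transversal intersection of $\mc{F}_{\alpha_1}$ with $\mc{F}_{\alpha_2}$ that converts this into nonfocusing relative to $H^{s+(d-1)/2-0}$ at the \emph{second} cone point. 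So the gain is attached to consecutive pairs of diffractions: $k+1$ consecutive D's yield exactly $k(d-1)/2-0$, which is the paper's sub-lemma, and $N+1$ consecutive D's give precisely the claimed $N(d-1)/2-0$ with no absorption needed. Your final exponent is right only because your overcount and your invalid absorption cancel; replacing step (iii)--(iv) by the coisotropic/nonfocusing induction closes the gap.
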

Here $u\in \mc{D}^{r-0}$, we mean that $u\in \mc{D}^{r-\e}$ for any $\e>0$. 
\begin{proof}
First note that if all diffractive geodesics starting from $\WF(A_m)$ leave $\Omega$ in time $t<2T_0+ND_{\max}$, then the result holds by Lemma \ref{lem:propSingConsequence}. 
\begin{remark}
Note that since $A_m$ is a pseudodifferential operator, we are abusing notation slightly and writing $\WF(A_m)\subset T^*X^o$ (rather than ${\WF}'(A_m)\subset T^*X^o\times T^*X^o$) for the wavefront set of such an operator.
\end{remark}
Therefore, we may assume this is not the case and hence that some geodesic hits a cone point within time $T_0$. Let $s_0$ denote the first time at which a cone point is reached from $\WF(A_m)$. Then in time $s_0+3\delta_\psi$, this bicharacteristic is at least $2\delta_{\psi}$ away from $\partial X$ (here we may take $\delta_\psi$ smaller if necessary). Therefore, taking $\delta_A$ small enough, and applying the propagation of singularities (Theorem \ref{thm:propSing}) we see that any singularity starting within $\delta_A$ of this one is propagated by $U(s_0+3\delta_\psi)$ to a distance greater than $\delta_\psi$ and less than $4\delta_\psi$ from the boundary and hence either $U(T_N)A_m$ has range in $H^s(\mc{O})$ or there exists $t_0<T_0$ such that 
$$U(	t _0)A_m=\sum_l A_lU(t_0)A_m\mod \mc{R}.$$
Now, we may remove all of the $A_l's$ so that $lm$ is not DR since they produce terms in $\mc{R}$. For those that are DR, we have seen that $d(Y_{\alpha},\WF(A_l))<4\delta_\psi.$ Now, repeating this argument, we have 
\begin{equation}
\label{eqn:propSum}U(T_N)A_m=\sum U(T_N-t_0-t_1-\dots-t_k)A_{j_{k+1}}U(t_k)A_{j_{k}}U(t_{k-1})\dots A_{j_1}U(t_0)A_m+E+R\end{equation}
where all of the words $J=(m,j_1,\dots, j_{k+1})$ are DR with 
$$\delta_\psi < d(Y_\alpha, \WF(A_{j_k}))<\sup\{d(Y_\alpha ,y)\mid y\in \WF(A_{j_k})\}<4\delta_\psi$$
for some $\alpha$, $E:\mc{D}^s\to H^s(\mc{O})$ and $R\in \mc{R}.$ 

Fix $\e>0$. Then for $\delta_\psi$ small enough each $t_i<D_{\max}+\e$. Indeed, if $t_i\geq D_{\max}+\e$, then there are two cases. Either $j_{i}j_{i+1}$. interacts with more than one cone point or all bicharacteristics starting in $\WF(A_{j_i})$ leave $\Omega$. By construction $j_ij_{i+1}$ interacts with only one cone point. Therefore, we must have that all bicharacteristics starting in $\WF(A_{j_i})$ leave $\Omega$ and hence terms like this can be absorbed into the operator $E$. 

Now, as in \cite{BaWu}, we associate a string of $D$'s and $G$'s to each word signifying a diffractive interaction and geometric interaction respectively. 
\begin{lemma}
Suppose that $J$ is a word containing containing $k+1$ consecutive $D'$s. Then 
 $$\mc{T}_J:\mc{D}^s\to \mc{D}^{s+k(d-1)/2-0}$$
 where we say that 
 $u\in \mc{D}^{s-0}$ if $u\in \mc{D}^{s-\e}$ for all $\e>0$.
\end{lemma}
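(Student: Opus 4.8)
The plan is to induct on $k$ --- more precisely, on the number $n=k+1$ of cone points through which the block of $D$'s carries the wave --- the engine being the refined single strictly diffractive estimate that is implicit in \cite[Section~3]{BaWu} and which, for exact product cones, goes back to Cheeger--Taylor \cite{CheegTayl} (see also Melrose--Wunsch \cite{MelWu} and Ford--Wunsch \cite{FoWu}). In the form I will use it, this estimate comes in two flavours. First: if generic $\mc{D}^{s}$-regular data, microlocalized near a single bicharacteristic incident to a cone point $Y_\alpha$, is propagated through $Y_\alpha$, then the diffracted wave, microlocalized near any outgoing bicharacteristic which is \emph{not} the geometric continuation (as in Definition~\ref{def:strictDiffrac}), is a conormal distribution of order $s$ --- there is no regularity gain, but the diffracted wave has acquired conormal structure. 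Second: if the incident wave is already conormal of order $r$, then the corresponding diffracted wave is conormal of order $r+(d-1)/2-\e$ for every $\e>0$. It is the combination of these two facts --- ``the first interaction only conormalizes, each subsequent interaction gains $(d-1)/2-\e$'' --- that accounts for the appearance of $k$ rather than $k+1$ factors of $(d-1)/2$ in the conclusion.

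With $\mc{T}_J$ as given, one first discards the harmless pieces: by Lemma~\ref{lem:propSingConsequence} the words that are not diffractively realizable contribute terms in $\mc{R}$, and (for $\delta_A$ small) so do the subwords forced into $\mc{R}$ by geometric interactions, so it suffices to treat the block of $k+1$ consecutive $D$'s itself. Now induct on $n=k+1$. For $n=1$ there is a single cone point: the first flavour above turns the incoming $\mc{D}^{s}$ data into a conormal diffracted wave of order $s=s+0\cdot(d-1)/2$, and the enclosing order-$0$ pseudodifferential cutoffs and free propagators preserve conormality and order (propagation of singularities on the smooth part $X^o$ is standard and transports conormality along the Hamilton flow). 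For the inductive step, write the block through $n+1$ cone points as (the block through the first $n$ cone points) followed by (free propagation to $Y_{\alpha_{n+1}}$, the interaction there, and microlocalization onto the outgoing leg); the intermediate cutoff can be taken to equal the identity microlocally on a leg strictly between $Y_{\alpha_n}$ and $Y_{\alpha_{n+1}}$ --- such a leg has length $\geq D_{\min}>0$ since distinct cone points are separated --- at the cost only of an operator in $\mc{R}$. By the inductive hypothesis the first factor maps $\mc{D}^{s}$ into conormal distributions of order $s+(n-1)(d-1)/2-\e$; applying the second flavour of the estimate at $Y_{\alpha_{n+1}}$, which is legitimate because the word is strictly diffractive there (so the outgoing cutoff avoids the geometric continuation) and because $\delta_A,\delta_\psi$ are taken small in Lemma~\ref{lem:awayConic}, we gain a further $(d-1)/2-\e'$. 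Absorbing the finitely many $\e$-losses into a single one (they are arbitrary) gives the block mapping $\mc{D}^{s}$ into $\mc{D}^{s+n(d-1)/2-0}$; the factors of $\mc{T}_J$ outside the block are compositions of order-$0$ pseudodifferential operators and propagators and cost no regularity, so the case $n=k+1$ is exactly the lemma.

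The main obstacle is not the bookkeeping of the induction but the microlocal input itself and its hypotheses: one must (i) have available the refined diffractive improvement in the two flavours above, with the correct conormal-order arithmetic --- this is the genuinely deep ingredient, resting on the Cheeger--Taylor/Melrose--Wunsch analysis of the diffracted wave near a cone point --- and (ii) verify at each of the $k$ gaining interactions that the relevant piece of the wave is microlocalized strictly away from the geometric continuation, so that the improvement (rather than the trivial propagation estimate) applies. Point (ii) is exactly what the hypothesis ``the word is a run of $D$'s'' encodes, and it is where the assumption $\delta_A,\delta_\psi\ll D_{\min}$ from Lemma~\ref{lem:awayConic}, together with the non-conjugacy/transversality hypotheses and the Melrose--Wunsch propagation theorem (Theorem~\ref{thm:propSing}), is used: they guarantee that a sufficiently fine microlocal partition separates, at each cone point, the strictly diffracted directions from the geometric ones.
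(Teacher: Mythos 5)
Your skeleton---induct along the run of $D$'s, with the first diffractive interaction producing microlocal \emph{structure} but no regularity gain, and each subsequent strictly diffractive interaction gaining $(d-1)/2-0$ derivatives---is exactly the argument the paper intends (the paper defers to the proof of \cite[Lemma 5]{BaWu}, where the case $k=1$ is carried out, and the iteration is the ``small adjustment''). The bookkeeping, the reduction modulo $\mc{R}$ via Lemma \ref{lem:propSingConsequence}, and the explanation of why one sees $k$ rather than $k+1$ factors of $(d-1)/2$ all match.

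There is, however, one conceptual slip that matters. Your ``second flavour'' of the key estimate---an incident wave that is conormal of order $r$ diffracts to one of order $r+(d-1)/2-0$---is not a correct statement on its own, and it misplaces where the standing hypotheses enter. The actual mechanism is: (a) after diffracting through $Y_{\alpha_i}$ the wave has \emph{coisotropic regularity with respect to the flowout} $\mc{F}_{\alpha_i}$ relative to $H^{r-0}$ (not conormality to a hypersurface); (b) because $\mc{F}_{\alpha_i}$ and $\mc{F}_{\alpha_{i+1}}$ are codimension-$(d-1)$ coisotropics meeting \emph{transversally}---which is precisely the non-conjugacy assumption (3)---this coisotropic regularity implies the \emph{nonfocusing condition} with respect to $\mc{F}_{\alpha_{i+1}}$ relative to $H^{r+(d-1)/2-0}$ (\cite[Propositions 4 and 5]{BaWu}); (c) the Melrose--Wunsch diffractive improvement then upgrades a wave that is nonfocusing relative to $H^{\sigma}$ and has no wavefront on the geometrically related incoming rays to $H^{\sigma-0}$ on the strictly diffracted outgoing rays, with coisotropic regularity there, which closes the induction. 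So the $(d-1)/2$ per step is bought by the transversal intersection of consecutive flowouts, not by the incident wave's conormal order; correspondingly, the role of hypothesis (3) is to supply nonfocusing, not, as your point (ii) has it, to separate diffracted from geometric directions (that separation is what small $\delta_A$, $\delta_\psi$ and the exclusion of $DGD$, $GG$ strings in Lemma \ref{lem:propSingConsequence} provide). If two cone points were conjugate, your inductive step would silently fail even though the word is all $D$'s. With the key estimate restated in this nonfocusing form, your induction is the paper's proof.
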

\begin{proof}
For $k=1$, this lemma is proved in the course of the proof of \cite[Lemma 5]{BaWu}. Since proving it for $k>1$ involves only small adjustments, we omit the proof here.

\end{proof}

We now show that the sum \eqref{eqn:propSum} contains only words with $N+1$ consecutive $D$'s. To see this, first observe that by Lemma \ref{lem:propSingConsequence}, no strings of the form DGD, GGD, DGG, GGG occur in the sum. Therefore, a word of length $N+3$ occurring in \eqref{eqn:propSum} must contain at least $N+1$ consecutive $D$'s and all that remains to show is no words of length less than $N+3$ may occur. 

Suppose that there is a word $J$, in \eqref{eqn:propSum} so that $|J|=k+1<N+4$. Then, since each of $t_i<D_{\max}+\e$, 
$$T_N-t_0-t_1-\dots -t_{k}>2T_0+100+(N+3-k-1)D_{\max}-k\e>2T_0+100.$$
Hence, since singularities in $\mc{T}_J$ do not interact with another cone point (or there would be another letter in $J$),
$U(T_N-t_0-t_1-\dots -t_{k})\mc{T}_J$ maps singularities to $\mc{O}$.

This completes the proof of Lemma \ref{lem:awayConic} since every term in $U(T_N)A_m$ either sends singularities to $\mc{O}$ or smooths them by $N(d-1)/2-0$ derivatives.
\end{proof}
We now complete the proof of Theorem \ref{thm:resFreeConic} by showing
\begin{lemma}
Fix $N>0$ and let $T_N$ be as in Lemma \ref{lem:awayConic}. Then for all $t>T_N+1$ and $s\in \re$,
$$\chi U(t)\chi:\mc{D}^s\to \mc{D}^{s+N(d-1)/2-0}.$$ 
\end{lemma}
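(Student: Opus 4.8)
The plan is to reduce the claim to Lemma~\ref{lem:awayConic} using the pseudodifferential partition of unity $I=\sum_\alpha\psi_\alpha+\varphi+\sum_i A_i\mod \Psi^{-\infty}_{\comp}(X^o)$ introduced above. First I would write $\chi U(t)\chi=\chi U(t)\big(\sum_\alpha\psi_\alpha+\varphi+\sum_i A_i\big)\chi+\chi U(t)R\chi$ with $R\in\Psi^{-\infty}_{\comp}$, the last term being residual hence harmless. The term $\chi U(t)\varphi\chi$ vanishes identically since $\varphi\equiv0$ on $\supp\chi$. For the terms $\chi U(t)\psi_\alpha\chi$, the point is that $\psi_\alpha$ is supported in a small neighborhood $\{d(x,Y_\alpha)<\delta_\psi\}$ of the cone point; by choosing a small extra time increment $\delta$ (this is where the "$+1$" in $t>T_N+1$ is used) and applying the Melrose--Wunsch propagation theorem (Theorem~\ref{thm:propSing}), one propagates the data off the cone point: there is a time $t'\le\delta_\psi$ and a decomposition $U(t')\psi_\alpha=\sum_l A_l U(t')\psi_\alpha\mod\mc{R}$ placing us back in the setting where the argument for $\chi U(t)A_m\chi$ applies, with only an $O(1)$ loss of time. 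So up to residual operators and the $O(1)$ time shift, everything reduces to controlling $\chi U(t)A_m\chi$ for $t>T_N$.

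Next I would handle $\chi U(t)A_m\chi$ by invoking Lemma~\ref{lem:awayConic}, which gives, for $t>T_N$,
$$U(t)A_m:\mc{D}^s\to L^2\mc{D}^{s+N(d-1)/2-0}+L^2H^s(\mc{O}).$$
Composing on the left with $\chi$, the first summand lands in $L^2\mc{D}^{s+N(d-1)/2-0}$ as desired. For the second summand, the key geometric fact stated above is that $\mc{O}$ is the outgoing set over $X\setminus\Omega$, invariant under the positive-time geodesic flow, and that $\supp\chi\subset\Omega$ while $\chi\equiv0$ on $X\setminus\Omega$. Hence singularities in $L^2H^s(\mc{O})$, having escaped $\Omega$ and being outgoing, never return to $\supp\chi$; applying propagation of singularities on $\re^d$ outside $B(0,R_1)$ (the "outgoing" mechanism already used repeatedly, e.g. in the proof of Theorem~\ref{thm:main} and in Section~\ref{sec:subadd}), multiplication by $\chi$ kills the $\mc{O}$-part modulo $\mc{R}$. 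Therefore $\chi U(t)A_m\chi:\mc{D}^s\to\mc{D}^{s+N(d-1)/2-0}$, and summing over the finitely many $m$ and $\alpha$ and folding in the residual terms completes the proof.

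The main obstacle is the bookkeeping around the $\mc{O}$-part: one must be careful that $L^2H^s(\mc{O})$ genuinely has no wavefront set over $\Omega\supset\supp\chi$ for the relevant time window, i.e. that a singularity which has escaped $\Omega$ in the outgoing region cannot re-enter. This is where the geometric nontrapping hypothesis and the invariance of $\mc{O}$ under the forward flow are essential — a bicharacteristic in $\mc{O}$ over $X\setminus\Omega$ stays in $\mc{O}$, and in the Euclidean end $\langle z,\zeta/\tau\rangle>0$ forces $|z(t)|$ to be increasing, so it never returns to $B(0,R_1)\supset\supp\chi$. A secondary, more routine point is verifying that the time loss incurred in propagating off the cone points in the $\psi_\alpha$ terms is bounded by the allotted $+1$ (which follows since $\delta_\psi\ll D_{\min}\le1$ can be taken as small as needed), and that the conversion from the $L^2$-in-time mapping property of Lemma~\ref{lem:awayConic} to a pointwise-in-$t$ statement for $t>T_N+1$ is handled by the same small-time-increment trick together with the energy estimate.
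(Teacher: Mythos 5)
Your proposal is correct and follows essentially the same route as the paper: decompose $\chi$ via the partition $\sum_\alpha\psi_\alpha+\varphi+\sum_iA_i$ modulo a residual term, apply Lemma~\ref{lem:awayConic} directly to the $A_j$ pieces, and handle the $\psi_\alpha$ pieces by propagating for a short time $\tau$ off the cone point and re-expanding in the $A_j$'s before invoking Lemma~\ref{lem:awayConic} again. Your extra care with discarding the $L^2H^s(\mc{O})$ component under multiplication by $\chi$ and with the $L^2$-in-time versus pointwise-in-$t$ issue only makes explicit what the paper leaves implicit.
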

\noindent Note that then applying Theorem \ref{thm:main} gives Theorem \ref{thm:resFreeConic}.
\begin{proof}
First, write
\begin{equation}
\label{eqn:simpleDecompose} \chi U(t)\chi =\sum_j\chi U(t)\chi A_j+\sum_\alpha \chi U(t)\chi \psi_\alpha.
\end{equation}
Then the first term has the desired mapping properties by Lemma \ref{lem:awayConic}. To handle the second term, we pick 
$$2\delta_\psi<\tau<\min(D_{\min}/50,1).$$
Then by the propagation of singularities (Theorem \ref{thm:propSing}), if $\WF(v)\subset \WF(\psi_\alpha)$, then for all $\beta$,
$$\psi_\beta U(\tau)v\in \mc{D}^\infty.$$
Thus, 
$$U(\tau)v-\sum_jA_jU(\tau)v\in L^2\mc{D}^\infty +L^2\mc{D}^s(\mc{O})$$
and the second term in \eqref{eqn:simpleDecompose} has 
$$\sum_\alpha \sum_j\chi U(t-\tau)(A_jU(\tau)\chi \psi_\alpha)\mod \mc{R}.$$
Then, applying Lemma \ref{lem:awayConic} with $t>\tau +T_N$ completes the proof.
\end{proof}
\subsection{Proof of Theorem \ref{thm:resConic}}
To prove Theorem \ref{thm:resConic}, we simply apply \cite[Main Theorem]{FoWu} together with Theorem \ref{thm:mainRes}.
Fix $\delta>0$ and let $\e>0$ to be chosen small enough (depending on $\delta$). Since $D^+_{\max}\neq -\infty$, $0<D^+_{\max}\leq D_{\max}<\infty$ and there exists a closed strictly diffractive geodesic $\gamma\in \mc{SD}^+$ with $\mc{N}_\gamma=k\geq 2$, with length $l>k(D^+_{\max}-\e)$. Hence, there exists $\gamma_1\in \mc{SD}^+$ with $\mc{N}_\gamma=Nk$ and length $Nl$. Thus, by \cite[Main Theorem]{FoWu} for $\varphi\in \Cc(\re)$ supported sufficiently near $t=Nl$ with $\varphi(Nl)=1$, and $|\tau|\geq 1$,
$$|\widehat{\varphi(t)u(t)}(\tau)|\geq c|\tau|^{-Nk(d-1)/2}$$
where $u(t)$ is as in Theorem \ref{thm:mainRes}. Applying Theorem \ref{thm:mainRes} gives that for any $\e_1>0$ there exists $c>0$ such that 
$$N\left(r,\frac{d-1}{2(D^{+}_{\max}-\e)}+\e\right)\geq N\left(r,\frac{(d-1)k}{2l}+\e\right)\geq r^{1-\e_1}.$$
Hence, taking $\e$ small enough depending on $\delta$ proves Theorem \ref{thm:resConic}.
\bibliographystyle{abbrv} 
\bibliography{references.bib}
\end{document}